\renewcommand{\mathbb}[1]{\mathbbm{#1}}
\numberwithin{equation}{section}
\renewcommand{\arraystretch}{1.2}
\let\originalleft\left
\let\originalright\right
\renewcommand{\left}{\mathopen{}\mathclose\bgroup\originalleft}
\renewcommand{\right}{\aftergroup\egroup\originalright}
\renewcommand{\cleardoublepage}{\clearpage\ifodd\c@page\else\vspace*{\fill}\thispagestyle{empty}\newpage\fi}
\newtheorem{lemma}{Lemma}[section]
\newaliascnt{proposition}{lemma}
\newtheorem{proposition}[proposition]{Proposition}
\newaliascnt{thm}{lemma}
\newtheorem{theorem}[thm]{Theorem}
\newaliascnt{corollary}{lemma}
\newtheorem{corollary}[corollary]{Corollary}
\newaliascnt{definition}{lemma}
\newtheorem{definition}[definition]{Definition}
\newaliascnt{claim}{lemma}
\newaliascnt{example}{lemma}
\newaliascnt{remark}{lemma}
\newtheorem{remark}[remark]{Remark}
\newaliascnt{question}{lemma}
\newaliascnt{conjecture}{lemma}
\theoremstyle{nonumberplain}
\newtheorem{proof}{Proof}
\theoremstyle{nonumberplain}
\newcommand{\lemmaxargument}{}
\newcommand{\propositionxargument}{}
\newcommand{\theoremxargument}{}
\newenvironment{theoremx}[1][]{\renewcommand\theoremxargument{#1}\theoremxaux}{\endtheoremxaux}
\newcommand{\corollaryxargument}{}
\newcommand{\definitionxargument}{}
\newcommand{\claimxargument}{}
\newcommand{\examplexargument}{}
\newcommand{\remarkxargument}{}
\newcommand{\exercisexargument}{}
\newcommand{\questionxargument}{}
\newcommand{\conjecturexargument}{}
\newcommand{\chairXaddress}{%
  Institut für Mathematik \\
  Lehrstuhl für Mathematik X \\
  Universität Würzburg \\
  Campus Hubland Nord \\
  Emil-Fischer-Straße 31 \\
  97074 Würzburg \\
  Germany}
\newcommand{\at}[1]          {\big|_{#1}}
\DeclareMathOperator{\id}    {\mathsf{id}}
\newcommand{\pr}             {\mathrm{pr}}
\DeclareMathOperator{\image} {\mathrm{im}}
\newcommand{\lie}[1]          {\mathfrak{#1}}
\DeclareMathOperator{\ad}     {\mathrm{ad}}
\DeclareMathOperator{\Ad}     {\mathrm{Ad}}
\newcommand{\acts}            {\mathbin{\triangleright}}
\newcommand{\Fun}[1][k]      {\mathscr{C}^{#1}}
\newcommand{\Cinfty}         {\Fun[\infty]}
\newcommand{\Lie}   {\mathscr{L}}
\newcommand{\Formen}         {\Omega}
\newcommand{\HdR}            {\mathrm{H}_{\scriptscriptstyle\mathrm{dR}}}
\DeclareMathOperator{\prol}{\mathrm{prol}}
\DeclareMathSymbol\dAlembert     {\mathord}{AMSa}{"03}
\newcommand{\tensor}[1][{}]           {\mathbin{\otimes_{\scriptscriptstyle{#1}}}}
\newcommand{\Anti}                    {\Lambda}
\newcommand{\Sym}                     {\mathrm{S}}
\DeclareMathOperator{\ins}            {\mathrm{i}}
\def\csname opt@stmaryrd.sty\endcsname
  \author{ \textbf{Thorsten
      Reichert}\thanks{\texttt{thorsten.reichert@mathematik.uni-wuerzburg.de}}
    \\[0.5cm]
    \chairXaddress }
  \newcommand{\qham}[1]{\mathbf{#1}}
  \newcommand{\J}{\qham{J}}
  \newcommand{\fparam}{\nu}
  \newcommand{\formal}{\llbracket \fparam \rrbracket}
  \newcommand{\Inv}{^{\mathrm{G}}}
  \newcommand{\InvS}[1]{^{\mathrm{G},{#1}}}
  \newcommand{\Equi}{_\lie{g}}
  \newcommand{\Red}{_{\mathrm{red}}}
  \newcommand{\Nice}{_{nice}}
  \newcommand{\drham}{\mathrm{d}}
  \newcommand{\dequi}{\drham\Equi}
  \newcommand{\dkoszul}{\delta}
  \newcommand{\qkoszul}{\partial}
  \newcommand{\Closed}{Z}
  \newcommand{\EquiCohom}{\mathrm{H}\Equi}
  \newcommand{\Def}{\mathrm{Def}}
  \newcommand{\qad}{\frac{1}{\fparam}\ad}
\title{Characteristic classes of star products on Marsden-Weinstein
  reduced symplectic manifolds}
\date{Current Version of equi: \gitAuthorIsoDate\\[0.2cm]
 {\small
   Last changes by \gitAuthorName{} on \gitAuthorDate \\
   Git revision of equi: \texttt{\gitAbbrevHash{}} \gitReferences
 }
}
\date{\today}
\begin{document}

%
%

\maketitle

%
%

\begin{abstract}
    In this note we consider a quantum reduction scheme in deformation
    quantization on symplectic manifolds proposed by Bordemann, Herbig
    and Waldmann based on BRST cohomology. We explicitly construct the
    induced map on equivalence classes of star products which will
    turn out to be an analogue to the Kirwan map in the Cartan model
    of equivariant cohomology. As a byproduct we shall see that every
    star product on a (suitable) reduced manifold is equivalent to a
    reduced star product.
\end{abstract}

\medskip
\noindent
\textbf{Mathematics Subject Classification Primary.} 55N91 \\
\textbf{Mathematics Subject Classification Secondary.} 55N91, 53D20 \\

\medskip
\noindent
\textbf{Keywords.} deformation quantization, marsden-weinstein
reduction, quantum momentum map, equivariant cohomology, cartan model

\medskip
\noindent
The final publication is available at link.springer.com, see \\
\href{http://link.springer.com/article/10.1007\%2Fs11005-016-0921-z}{http://link.springer.com/article/10.1007\%2Fs11005-016-0921-z}

%
%


%
%

\section{Introduction}
\label{sec:introduction}

Since ancient times symmetries have played a pivotal role in both
physics and mathematics. And so has the art of symmetry reduction,
i.e. getting rid of excess degrees of freedom. One prominent example
would, of course, be Marsden-Weinstein reduction on symplectic
manifolds \cite{marsden.weinstein:1974a}, which will be the main focus
of this note. Given a classical system, conceived as a symplectic
manifold $(M,\omega)$, with symmetry given by a Hamiltonian
(i.e. there is an $\ad^*$-equivariant momentum map $J$) action of a
Lie group $G$ by symplectomorphisms, Marsden-Weinstein reduction is a
two-step-process. First, take the level set $C \coloneqq
J^{-1}(\left\{0\right\})$ ($0$ should be a value and regular value of
$J$) of the momentum map, whereupon we assume the induced group action
to be free and proper. This allows one, in the second step, to build
the quotient $M\Red = C / G$ which turns out to be again a symplectic
manifold. The whole situation can be summarized in the diagram
\begin{equation*}
    M \xhookleftarrow{\ \ \iota\ \ } C
    \xtwoheadrightarrow{\pi} M\Red.
\end{equation*}

With the advent of Quantum Mechanics, there have been numerous
proposals of how to implement symmetry reduction in any given Quantum
Theory, starting with Dirac \cite{dirac:1950a}. However, just as with
the multitude of quantization schemes developed over time, even in one
such scheme, there is typically no ``universal'' reduction
process. Here we will investigate only one quantum reduction scheme in
the context of deformation quantization \cite{bayen.et.al:1978a}
proposed by Bordemann, Herbig and Waldmann in
\cite{bordemann.herbig.waldmann:2000a} and further developed in
\cite{gutt.waldmann:2010a} by Gutt and Waldmann, which is based on
BRST cohomology. We will provide a brief recap to the extent needed
later on in \autoref{sec:reduction}. One of the central ingredients of
this reduction scheme is the notion of quantum momentum maps, a direct
generalization of the concept of momentum maps on symplectic manifolds
(see \cite{xu:1998a}, we will mostly follow conventions from
\cite{mueller-bahns.neumaier:2004a, mueller-bahns.neumaier:2004b}):
given a connected Lie group $G$ with Lie algebra $\lie{g}$ acting on a
symplectic manifold $M$ by symplectomorphisms and a star product
$\star$ on $M$, a quantum momentum map is a linear map $\J\colon
\lie{g} \longrightarrow \Cinfty(M)\formal$ into the formal series of
smooth functions on $M$ such that for all $\xi, \eta \in \lie{g}$
\begin{equation*}
    \Lie_{X_\xi} = -\qad_\star\left( \J(\xi) \right) \qquad \text{and}
      \qquad \left[ \J(\xi), \J(\eta) \right]_\star = \fparam\J\left(
          \left[\xi, \eta \right] \right)
\end{equation*}
hold (where we denoted by $X_\xi$ the fundamental vector field of
$\xi$). The pair $(\star, \J)$ is then called an equivariant star
product and the equivalence classes of equivariant star products where
recently shown to be characterized by the second equivariant
cohomology (in the Cartan model, see \cite{cartan:1951a,
  guillemin.sternberg:1999a, kalkman:1993a}) $\EquiCohom^2(M)$ of $M$
with respect to $G$ in \cite{reichert.waldmann:2016a}. Star products
on the Marsden-Weinstein reduced symplectic manifold, which we will
throughout denote by $M\Red$, on the other hand, are classified by the
second de Rham cohomology $\HdR^2(M\Red)$, see
\cite{bertelson.cahen.gutt:1997a, deligne:1995a, fedosov:1996a,
  gutt.rawnsley:1999a, nest.tsygan:1995a, weinstein.xu:1998a} for the
symplectic and \cite{kontsevich:2003a} for the more general Poisson
case.

The main question we will be answering is the following: given any
equivariant star product $(\star, \J)$ on $M$ with characteristic
class $c\Equi(\star, \J) \in \frac{1}{\fparam} \EquiCohom^2(M)\formal$
and the corresponding reduced star product $\star\Red$ with
characteristic class $c(\star\Red) \in \frac{1}{\fparam}
\HdR^2(M\Red)\formal$, what exactly is the relation between these
classes?  A previous result by Bordemann \cite{bordemann:2005a}
already gives a partial answer. Using the representation of
equivariant differential forms as equivariant maps $\lie{g}
\longrightarrow \Formen(M)$ (see \autoref{sec:cohomology}) gives maps
\begin{equation*}
    \EquiCohom^2(M) \xrightarrow{\mathrm{ev}_0}
    \HdR\InvS{2}(M) \stackrel{i}{\longrightarrow}
    \HdR^2(M)
\end{equation*}
where $\HdR\InvS{2}(M)$ denotes the second invariant de Rham
cohomology of $M$ with respect to the action of $G$ (note that, for
noncompact $G$, this is different from the invariant part of the de
Rham cohomology), $\mathrm{ev}_0$ is induced by the evaluation at $0
\in \lie{g}$ and $i$ is induced by the inclusion of invariant
differential forms into differential forms. Both are compatible with
taking (equivariant, invariant) characteristic classes of star
products, that is the following diagram commutes
\cite{reichert.waldmann:2016a}
\begin{center}
    \begin{tikzpicture}
        \matrix (m) [matrix of math nodes, ampersand replacement=\&,
        column sep=2.5em, row sep=2.5em] {
          \mathrm{Star}\Equi(M) \&
          \&
          \mathrm{Star}(M) \\
          \frac{1}{\fparam}\EquiCohom^2(M)\formal \&
          \frac{1}{\fparam}\HdR\InvS{2}(M)\formal \&
          \frac{1}{\fparam}\HdR^2(M)\formal \\
        };
        \path[->] (m-1-1) edge (m-1-3);
        \path[->] (m-2-1) edge node[below] {$\mathrm{ev}_0$} (m-2-2);
        \path[->] (m-2-2) edge node[below] {$i$} (m-2-3);
        \path[->] (m-1-1) edge node[left] {$c\Equi$} (m-2-1);
        \path[->] (m-1-3) edge node[right] {$c$} (m-2-3);
    \end{tikzpicture}
\end{center}
where the top map is the inclusion of equivariant star products into
star products on $M$.  One can then compare the characteristic classes
of $\star$ and $\star\Red$ on the momentum level set $C$ used in the
classical Marsden-Weinstein reduction via pullbacks
\begin{equation*}
    \HdR(M) \stackrel{\iota^*}{\longrightarrow} \HdR(C)
    \stackrel{\pi^*}{\longleftarrow} \HdR(M\Red)
\end{equation*}
and one finds that $\iota^* c(\star) = \pi^* c(\star\Red)$
\cite{bordemann:2005a}. However, even for nontrivial $\HdR^2(M)$ or
$\HdR^2(M\Red)$, there are cases where $\HdR^2(C) = 0$ and thus this
equation does not provide any insights. One such example is given by
the Hopf-fibration
\begin{equation*}
    \mathbb{C}^{n+1} \setminus \left\{0\right\} \xhookleftarrow{\quad}
    S^{2n+1}
    \xtwoheadrightarrow{} \mathbb{C}\mathbb{P}^n.
\end{equation*}
To alleviate this problem, we will throughout \autoref{sec:cohomology}
construct a map $K\colon \EquiCohom(M) \longrightarrow \HdR(M\Red)$
which circumvents the projection $\EquiCohom(M) \longrightarrow
\HdR(M)$ and enables us to prove the main theorem in
\autoref{sec:classification}
\begin{theoremx}[(Main theorem)]
    Let $M$ be a symplectic manifold equipped with a smooth and proper
    Hamiltonian $G$-action for a finite dimensional, connected Lie
    group $G$ and let $J\colon M \longrightarrow \lie{g}^*$ be the
    corresponding $\Ad^*$-equivariant momentum map. Assume furthermore
    that the induced action of $G$ on $J^{-1}(\left\{0\right\})$ is
    free. Given any equivariant star product $(\star,\J)$ on $M$ and
    the corresponding reduced star product $\star\Red$ on $M\Red$, we
    then have
    \begin{equation*}
        K\left( c\Equi(\star, \J) \right) = c(\star\Red).
    \end{equation*}
Furthermore, $K$ is surjective.
\end{theoremx}
The map $K$ will turn out to be the Cartan model analogue of the
Kirwan map \cite{kirwan:1984}, which is defined for the topological,
or Borel, model otherwise known as the homotopy quotient. The critical
remark here is that we will not restrict ourselves to compact Lie
groups and hence the cohomologies of the Cartan and Borel model
typically do not agree. The reason we are using the Cartan model at
all is of course the fact, that it classifies (also in the noncompact
case) equivariant star products on symplectic manifolds
\cite{reichert.waldmann:2016a}.

During the construction of $K$ one pivotal result will be that for any
$G$-principal bundle $P \stackrel{\pi}{\longrightarrow} B$ the
equivariant cohomology (in the Cartan model) of the total space is
related to the de Rham cohomology of the base by $\pi^* \colon \HdR(B)
\cong \EquiCohom(P)$ \cite{cartan:1951a,
  guillemin.sternberg:1999a}. Since in the context of
Marsden-Weinstein reduction the action of $G$ on the momentum level
set $C$ is proper and free, $C$ can be viewed as a principal bundle
over $M\Red$ \cite{duistermaat.kolk:2000a}, which enables us to write
$K$ concisely as
\begin{equation*}
    K = \left( \pi^* \right)^{-1} \circ \iota^*\colon \EquiCohom(M)
    \stackrel{\iota^*} \longrightarrow \EquiCohom(C) \xrightarrow{\left(
          \pi^* \right)^{-1}} \HdR(M\Red)
\end{equation*}
Finally, the surjectivity of $K$ shows that any star product on
$M\Red$ is equivalent to one obtained by quantum reduction from $M$.

\noindent
\textbf{Acknowledgements:} The author would like to thank Stefan
Waldmann for numerous helpful discussions, James Stasheff for useful
advice on the preprint, the anonymous referee for their constructive
input, Jonas Schnitzer for assistance with proofreading the text as
well as Marco Benini and Alexander Schenkel for valuable remarks.

\section{Reduction of Star Products}
\label{sec:reduction}

Throughout this exposition, let $(M,\omega)$ be a connected symplectic
manifold equipped with a smooth and proper Hamiltonian $G$-action for
a finite dimensional, connected Lie group $G$ and let $J\colon M
\longrightarrow \lie{g}^*$ be the corresponding $\Ad^*$-equivariant
momentum map. We will furthermore require that $0$ is a value and
regular value of $J$ and that the $G$-action on
$J^{-1}(\left\{0\right\})$ is free and proper. In this setting we can
apply Marsden-Weinstein reduction to obtain the reduced symplectic
manifold $M\Red \coloneqq J^{-1}(\left\{0\right\}) / G$. We then have
the following maps
\begin{equation*}
    M \xhookleftarrow{\ \ \iota\ \ } C \coloneqq J^{-1}(\left\{0\right\})
    \xtwoheadrightarrow{\pi} M\Red
\end{equation*}
where $\iota$ is an inclusion of a closed submanifold and $\pi$ a
surjective submersion. The symplectic two-form $\omega\Red$ on $M\Red$
is uniquely determined by $\iota^*\omega = \pi^*\omega\Red$. We will
frequently summarize the above situation by stating that $M\Red$ is
Marsden-Weinstein reduced \cite{marsden.weinstein:1974a} from $M$ via
$C$ (for details see e.g\ \cite{waldmann:2007a}).

For the convenience of the reader we will briefly recall a
construction from \cite{gutt.waldmann:2010a} to obtain star products
on $M\Red$ from star products on $M$ (see also \cite{bordemann:2005a,
  bordemann.herbig.waldmann:2000a, cattaneo.felder:2004a,
  cattaneo.felder:2007a, forger.kellendonk:1992a, lu:1993a}).  First,
since the action of $G$ is proper, there exists an open neighbourhood
$M\Nice \subseteq M$ of $C$ together with a $G$-equivariant
diffeomorphism
\begin{equation*}
    \Phi\colon M\Nice \longrightarrow U\Nice \subseteq C \times
    \lie{g}^* \qquad
    \text{with} \qquad \mathrm{pr}_1 \circ \Phi \circ \iota = \id_C
\end{equation*}
onto an open neighbourhood $U\Nice$ of $C \times \left\{0\right\}$,
where the $G$-action on $C\times \lie{g}^*$ is the product action of
the one on $C$ and $\Ad^*$, such that for each $p\in C$ the subset
$U\Nice \cap \left( \left\{p\right\} \times \lie{g}^* \right)$ is star
shaped around $\left\{p\right\} \times \left\{0\right\}$ and the
momentum map $J$ is given by the projection onto the second factor,
i.e. $J\at{M\Nice} = \pr_2 \circ \Phi$
\cite{bordemann.herbig.waldmann:2000a}. We can use $\Phi$ to define
the following prolongation map:
\begin{equation}
\label{eq:prol}
    \prol\colon \Cinfty(C) \longrightarrow \Cinfty(M\Nice)\colon \phi
    \longmapsto \left( \pr_1 \circ \Phi \right)^*\ \phi
\end{equation}
Clearly we have $\iota^* \circ \prol = \id_{\Cinfty(C)}$. Next
consider the (classical) Koszul complex, given by
\begin{equation*}
    \Cinfty\left( M, \Anti^\bullet_\mathbb{C} \lie{g}\right) =
    \Cinfty(M) \tensor \Anti^\bullet_\mathbb{C} \lie{g} \qquad \text{with} \qquad
    \dkoszul = \ins(J).
\end{equation*}
$U\Nice$ being star shaped allows to define
\begin{equation*}
    (h_k x)(p) = e_a \wedge \int\limits_0^1 t^k \frac{\partial\left( x \circ
      \Phi^{-1} \right)}{\partial \mu_a}(c, t\mu) \mathrm{d} t
\end{equation*}
for $x\in \Cinfty(M,\Anti^k \lie{g})$ where we chose a basis
$\left\{e_a\right\}$ of $\lie{g}$ and denoted $\Phi(p) = (c,\mu)$. The
following proposition \cite[Prop. 2.1]{gutt.waldmann:2010a} summarizes
some properties of $h_k$:
\begin{proposition}
    The Koszul complex $\left( \Cinfty(M\Nice, \Anti^\bullet\lie{g}),
        \dkoszul \right)$ is acyclic with explicit homotopy $h$ and
    homology $\Cinfty(C)$ in degree $0$. In detail, we have
    \begin{equation*}
        h_{k-1}\dkoszul_k + \dkoszul_{k+1} h_k = \id_{\Cinfty(M\Nice,
          \Anti \lie{g})}
    \end{equation*}
    for $k \geq 0$ and
    \begin{equation*}
        \prol \iota^* + \dkoszul_1 h_0 = \id_{\Cinfty(M\Nice)}
    \end{equation*}
    as well as $\iota^* \dkoszul_1 = 0$. Thus the Koszul complex is a
    free resolution of $\Cinfty(C)$ as $\Cinfty(M\Nice)$-modules. We have
    \begin{equation*}
        h_0 \prol = 0
    \end{equation*}
    and all the homotopies $h_k$ are $G$-equivariant.
\end{proposition}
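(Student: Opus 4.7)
The plan is to transport the entire problem via $\Phi$ to the coordinates $(c,\mu)$ on $U\Nice \subseteq C \times \lie{g}^*$, where $J$ becomes the projection onto the second factor and the star-shapedness of the fibres $U\Nice \cap (\{p\} \times \lie{g}^*)$ makes the standard Poincaré-type integral meaningful. Writing $\tilde x = x \circ \Phi^{-1}$, the Koszul differential becomes the pointwise contraction $\ins(\mu)\colon \Anti^k \lie{g} \to \Anti^{k-1} \lie{g}$, and the homotopy reads $(h_k x)\circ\Phi^{-1}(c,\mu) = e_a \wedge \int_0^1 t^k (\partial_a \tilde x)(c,t\mu)\, \D t$, where $\partial_a$ denotes partial differentiation with respect to $\mu_a$. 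The two easy identities drop out immediately in these coordinates: since $\prol g$ corresponds to the function $(c,\mu) \mapsto g(c)$ (independent of $\mu$), all $\partial_a$ vanish, giving $h_0 \prol = 0$; and $\iota^* \dkoszul_1 = 0$ is simply the statement $J|_C = 0$.

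For the homotopy identity in degree $0$ I would apply $\dkoszul_1$ to $h_0 f$, producing $\mu_a \int_0^1 (\partial_a \tilde f)(c,t\mu)\, \D t$, recognise the integrand as $\frac{\D}{\D t} \tilde f(c, t\mu)$, and evaluate the resulting boundary terms at $t=0,1$ to obtain $\tilde f(c,\mu) - \tilde f(c,0) = f - \prol \iota^* f$. For general $k \geq 1$ the main calculation is $\dkoszul_{k+1} h_k x + h_{k-1} \dkoszul_k x$. Expanding $\dkoszul_{k+1}(e_a \wedge \beta) = \mu_a \beta - e_a \wedge \ins(\mu)\beta$ as an antiderivation and differentiating $\ins(\mu')\tilde x(c,\mu')$ under the integral in $h_{k-1} \dkoszul_k$ yields an extra term $e_a \wedge \ins(e^a) \tilde x(c,t\mu)$, which equals $k\,\tilde x(c,t\mu)$ by the standard identity $\sum_a e_a \wedge \ins(e^a) = k \cdot \id$ on $\Anti^k \lie{g}$. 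The two $\ins(\mu)\partial_a$ pieces cancel and what remains is
\begin{equation*}
    \int_0^1 \bigl[k t^{k-1}\tilde x(c,t\mu) + t^k \mu_a (\partial_a \tilde x)(c,t\mu)\bigr]\,\D t = \int_0^1 \frac{\D}{\D t}\bigl[t^k \tilde x(c,t\mu)\bigr]\,\D t = \tilde x(c,\mu),
\end{equation*}
which is exactly the desired identity. Combined with $\dkoszul_1 h_0 = \id - \prol \iota^*$ and $\iota^*\dkoszul_1 = 0$, this realises $\prol$ as a chain map with homotopy inverse $\iota^*$, so the complex is a free resolution of $\Cinfty(C)$.

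Finally, $G$-equivariance of each $h_k$ will follow from $G$-equivariance of $\Phi$ with respect to the product action $(c,\mu) \mapsto (g\cdot c, \Ad^*_g \mu)$: under this action $\partial_{\mu_a}$ transforms contragrediently to $e_a$, so the combination $\sum_a e_a \wedge \partial_{\mu_a}$ inside the integral is invariant, and the rescaling $\mu \mapsto t\mu$ commutes with $\Ad^*_g$. The main obstacle is bookkeeping the sign conventions in step involving the graded commutator of $\ins(\mu)$ with $e_a \wedge$ and ensuring that the extra term generated when $\partial_a$ falls on $\mu'$ in $\ins(\mu')\tilde x(c,\mu')$ is tracked correctly; once this is in place everything collapses to a one-line application of the fundamental theorem of calculus.
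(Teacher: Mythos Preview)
Your argument is correct: transporting via $\Phi$ so that $\dkoszul$ becomes pointwise contraction by $\mu$, then recognising $\int_0^1 \frac{\D}{\D t}[t^k \tilde x(c,t\mu)]\,\D t$ after the cross terms cancel, is exactly the standard computation, and your treatment of the equivariance via the invariance of $\sum_a e_a \otimes \partial_{\mu_a}$ is the right one. Note that the paper does not actually prove this proposition but quotes it from \cite[Prop.~2.1]{gutt.waldmann:2010a}; what you have written is essentially the proof given there (and earlier in \cite{bordemann.herbig.waldmann:2000a}), so there is nothing to compare on the level of strategy.
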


Turning towards quantum reduction, we will exclusively be interested
in equivariant (formal) star products on $M$, so let us give a quick
definition (compare \cite{hamachi:2002a},
\cite{mueller-bahns.neumaier:2004a}, \cite{xu:1998a}):
\begin{definition}
    A (formal) star product on $(M,\omega)$ is a bilinear map
    \begin{equation*}
        \star\colon \Cinfty(M) \times \Cinfty(M) \longrightarrow
        \Cinfty(M)\formal\colon (f,g) \longmapsto f\star g =
        \sum\limits_{k=0}^\infty \fparam^k C_k(f,g),
    \end{equation*}
    such that its $\fparam$-linear extension to $\Cinfty(M)\formal$ is
    an associative product, all $C_k$ are bidifferential operators,
    $C_0(f,g) = fg$ and $C_1(f,g) - C_1(g,f) = \left\{ f,g
    \right\}_\omega$ for all $f,g \in \Cinfty(M)$.  An equivariant
    star product is a pair $(\star, \J)$ of a star product $\star$
    together with a linear map $\J\colon \lie{g} \longrightarrow
    \Cinfty(M)\formal$ such that
    \begin{equation*}
        \Lie_\xi = -\qad_\star(\J(\xi)) \qquad \text{and} \qquad
        \J([\xi,\eta]) = \left[ \J(\xi), \J(\eta)
        \right]_\star
    \end{equation*}
    where we denoted by $\Lie_\xi$ the Lie derivative with respect to
    the fundamental vector field $X_\xi$ of $\xi$.
\end{definition}
The definition of an equivariant star product immediately implies that
$\Lie_\xi$ is a derivation of $\star$ and, since $G$ is assumed to be
connected, that $G$ acts by $\star$-automorphisms. Here we are using
the convention $\ad_\star(f)(g) \coloneqq \left[ f,g \right]_\star$
with $\left[\ ,\,\right]_\star$ being the commutator with respect to
$\star$.

We will start by introducing the quantized Koszul operator
\cite{gutt.waldmann:2010a}:
\begin{definition}[Quantized Koszul operator]
    Let $\kappa \in \mathbb{C}\formal$. The quantized Koszul operator\\
    $\qkoszul^{(\kappa)}\colon \Cinfty(M, \Anti^\bullet_\mathbb{C}
    \lie{g})\formal \longrightarrow \Cinfty(M, \Anti^{\bullet -
      1}_\mathbb{C} \lie{g})\formal$ is defined by
    \begin{equation*}
        \qkoszul^{(\kappa)}x = \ins(e_a)x \star \J_a +
        \frac{\fparam}{2} C_{ab}^c e_c \wedge \ins(e_a)\ins(e_b)x +
        \fparam \kappa \ins(\Delta)x
    \end{equation*}
    where $C_{ab}^c = e^c\left( \left[ e_a,e_b \right] \right)$ are
    the structure constants of $\lie{g}$ and
    \begin{equation}
        \Delta(\xi) = \mathrm{tr}\ad(\xi) \qquad \text{for } \xi \in \lie{g}
    \end{equation}
    is the modular one-form $\Delta \in \lie{g}^*$ of $\lie{g}$.
\end{definition}
Here $\left\{e_a\right\}$ is assumed to be any basis of $\lie{g}$,
$\J_a \coloneqq \J(e_a)$ and $\ins(\xi) x$ denotes the insertion of
any $\xi \in \lie{g}$ into the first argument of $x \in
\Anti^\bullet_\mathbb{C}\lie{g}$. We will from now on fix $\kappa$ and
omit any explicit mention in all subsequent formulae. Some properties
of $\qkoszul$ are collected in \cite[Lemma~3.4]{gutt.waldmann:2010a}:
\begin{lemma}
    Let $(\star, \J)$ be an equivariant star product and $\kappa \in
    \mathbb{C}\formal$. Then one has
    \begin{enumerate}[i)]
    \item $\qkoszul$ is left $\star$-linear.
    \item The classical limit of $\qkoszul$ is
        $\dkoszul$.
    \item $\qkoszul$ is $G$-equivariant.
    \item $\qkoszul \circ \qkoszul = 0$.
    \end{enumerate}
\end{lemma}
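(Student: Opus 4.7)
The properties \textit{i.)}--\textit{iii.)} should follow essentially by unwinding the definitions, while the nilpotency in \textit{iv.)} carries the real content and is where the actual work lies. The plan is therefore to dispatch the first three directly and then focus on a careful term-by-term expansion for \textit{iv.)}.

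For \textit{i.)}, the wedge product and the insertions act only on the $\Anti^\bullet\lie{g}$-factor and therefore commute with left $\star$-multiplication by $f \in \Cinfty(M)\formal$, while the right $\star$-multiplication by $\J_a$ in the first summand commutes with left multiplication by associativity. For \textit{ii.)}, setting $\fparam = 0$ kills the last two summands, and the first collapses to $\ins(e_a) x \cdot J_a = \ins(J) x = \dkoszul x$, since $\star$ reduces to the pointwise product and $\J_a$ reduces to $J(e_a)$ in zeroth order. For \textit{iii.)}, I would rewrite $\qkoszul$ in an intrinsic, basis-free form: the quantum momentum map $\J\colon \lie{g} \to \Cinfty(M)\formal$ is $\Ad$-equivariant (by the defining relations of an equivariant star product together with connectedness of $G$), the structure constants $C_{ab}^c$ assemble into the $\Ad$-invariant Lie bracket of $\lie{g}$, and the modular one-form $\Delta$ is $\Ad$-invariant because $\tr \ad(\Ad(g)\xi) = \tr \ad(\xi)$. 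Since $G$ acts by $\star$-automorphisms and the operations $\ins$ and $\wedge$ are natural under the $\Ad$-action, each summand of $\qkoszul$ commutes with the $G$-action.

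For \textit{iv.)} the plan is to expand $\qkoszul^2 x$ into its nine cross terms and to regroup them by type: quadratic in $\J$, linear in $\J$ times one structure constant, quadratic in structure constants, and separately the $\kappa$-contributions. The decisive cancellation comes from the quadratic-in-$\J$ contribution $\ins(e_b)\ins(e_a) x \star \J_a \star \J_b$, which by antisymmetry of the double insertion collapses to $\tfrac{1}{2}\ins(e_b)\ins(e_a) x \star [\J_a, \J_b]_\star$; the equivariance relation $[\J_a, \J_b]_\star = \fparam C_{ab}^c \J_c$ then converts this into exactly the expression needed to cancel the diagonal part of applying the first summand of $\qkoszul$ to the structure-constant summand. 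The remaining off-diagonal structure-constant-squared terms should vanish by the Jacobi identity in the form $C_{ab}^c C_{cd}^e + \mathrm{cyclic} = 0$, and the $\kappa$-dependent terms cancel independently using the $\ad$-invariance $C_{ab}^c \Delta_c = \Delta([e_a,e_b]) = \tr \ad([e_a,e_b]) = 0$ together with antisymmetry of insertions. The main obstacle is purely combinatorial bookkeeping --- tracking signs, reindexing summation indices, and invoking each identity at the right moment --- but there is no conceptual difficulty: the operator $\qkoszul$ is the natural quantisation of the classical Koszul--Chevalley--Eilenberg differential, with the modular correction being exactly what is needed to absorb the failure of $\lie{g}$ to be unimodular.
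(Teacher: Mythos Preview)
The paper does not actually prove this lemma: it is quoted verbatim from \cite[Lemma~3.4]{gutt.waldmann:2010a} and stated without argument. Your outline is the standard direct verification and is correct in all essential points; in particular, the mechanism you identify for \textit{iv.)}---antisymmetry of double insertions turning $\ins(e^b)\ins(e^a)x \star \J_a \star \J_b$ into a $\star$-commutator, the quantum momentum relation $[\J_a,\J_b]_\star = \fparam C_{ab}^c \J_c$ feeding this back into the structure-constant term, Jacobi for the purely structure-constant contributions, and $C_{ab}^c\Delta_c = \tr\ad([e_a,e_b]) = 0$ for the modular pieces---is exactly how the computation in \cite{gutt.waldmann:2010a} proceeds. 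One small point worth making precise when you carry it out: since the lemma asserts $\qkoszul^2 = 0$ for \emph{every} $\kappa$, the $\kappa$-independent, $\kappa$-linear, and $\kappa^2$ parts must vanish separately; the $\kappa^2$ part is $\ins(\Delta)^2 = 0$ trivially, and the $\kappa$-linear part reduces (after using that all insertions anticommute) to the identity $C_{ab}^c\Delta_c = 0$ you already isolated, together with the anticommutation of $\ins(\Delta)$ with the right $\star$-multiplication term.
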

Following \cite{bordemann.herbig.waldmann:2000a} one can introduce a
deformation of the classical restriction map $\iota^*$ by
\begin{equation*}
    I^* = \iota^* \left( \id + \left(
            \qkoszul_1 - \dkoszul_1 \right) h_0
    \right)^{-1}\colon \Cinfty(M)\formal \longrightarrow
    \Cinfty(J^{-1}(\left\{0\right\}))\formal
\end{equation*}
where $h$ is a homotopy of the classical Koszul complex. Furthermore,
one can find a homotopy $H$ with
$H_{-1} = \prol$ such that the
augmented complex with $\qkoszul_0 =
I^*$ has trivial homology:
\begin{equation*}
    H_{k-1} \qkoszul_k +
    \qkoszul_{k+1} H_k =
    \id_{\Cinfty(M, \Anti^\bullet \lie{g})\formal}
\end{equation*}
for $k\geq 0$ and $I^* \prol =
\id_{\Cinfty(C)\formal}$ for $k=-1$. Moreover the maps
$I^*$ and $H_k$ are
$G$-equivariant.
To finally arrive at the reduced star product, one defines a left
$\star$-ideal $\mathcal{J}_C$ and its normalizer $\mathcal{B}_C$
\begin{equation*}
    \begin{split}
        \mathcal{J}_C &\coloneqq \image(\qkoszul_1)
        \subseteq \Cinfty(M)\formal \\
        \mathcal{B}_C &\coloneqq \left\{ f \in \Cinfty(M)\formal\
        \right|\left.\ \left[ f, \mathcal{J}_C \right]_\star \subseteq
            \mathcal{J}_C \right\}
    \end{split}
\end{equation*}
to obtain the mutually inverse maps
\begin{equation}
\label{eq:reduction:functions}
    \begin{matrix}
        \mathcal{B}_C / \mathcal{J}_C &
        \longrightarrow & \pi^*
        \Cinfty(M\Red)\formal &
        : &
        [f] &
        \longmapsto &
        I^* f
        \\
        \Cinfty(M\Red)\formal &
        \longrightarrow &
        \mathcal{B}_C / \mathcal{J}_C &
        : &
        u &
        \longmapsto &
        [ \prol(\pi^* u)]
        \\
    \end{matrix}
\end{equation}
which enable us to define
\begin{equation*}
    \pi^*\left( u \star\Red v \right) \coloneqq
    I^* \left( \prol (\pi^* u) \star \prol (\pi^*
        v) \right)
\end{equation*}
for all $u,v \in \Cinfty(M\Red)\formal$.

Since we are interested mainly in classifying equivariant star
products and their corresponding reduced star products, the first
critical property to check is whether equivariantly equivalent star
products on $M$ reduce to equivalent star products on $M\Red$.
\begin{lemma}
    Let $T\colon (\star^1, \qham{J}^1) \longmapsto (\star^2,
    \qham{J}^2)$ be an equivariant equivalence, then
    \begin{equation*}
        T\Red \coloneqq \left( (\pi^*)^{-1} \circ
            I^* \right)
        \circ T \circ \left( \prol \circ \pi^* \right)
    \end{equation*}
    is an equivalence $T\Red\colon \star^{1}\Red \longmapsto
    \star^{2}\Red$.
\end{lemma}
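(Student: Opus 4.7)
The plan is to descend $T$ through the quantum reduction by first showing that $T$, extended to $\Cinfty(M,\Anti\lie{g})\formal \cong \Cinfty(M)\formal \tensor \Anti\lie{g}$ as $T \tensor \id_{\Anti\lie{g}}$, intertwines the quantized Koszul operators attached to the two star products. Writing out the definition, $\qkoszul^i$ differs from the classical Koszul operator only through the term $\ins(e_a)(\argument) \star^i \qham{J}^i_a$ (together with a purely algebraic modular contribution which commutes with $T$ trivially). Multiplicativity $T(f\star^1 g) = T(f)\star^2 T(g)$ combined with $T(\qham{J}^1_a) = \qham{J}^2_a$, which is part of the definition of an equivariant equivalence, then immediately yields $\qkoszul^2 \circ T = T \circ \qkoszul^1$.

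Since $T$ is a $\mathbb{C}\formal$-linear bijection, this intertwining gives $T(\mathcal{J}_C^1) = \mathcal{J}_C^2$, and because $T$ is an algebra isomorphism, also $T(\mathcal{B}_C^1) = \mathcal{B}_C^2$. Together with the $G$-equivariance of $T$, $\prol$, and the $I^{*,i}$, this ensures that $T\Red$ is a well-defined $\mathbb{C}\formal$-linear endomorphism of $\Cinfty(M\Red)\formal$. The candidate inverse is the map built analogously from $T^{-1}$; to verify $T\Red^{-1} \circ T\Red = \id$ I would invoke the identities already present in the construction, namely $I^{*,i} \circ \prol = \id$ and $\prol \circ I^{*,i} \equiv \id \pmod{\mathcal{J}_C^i}$ (from $\prol \circ I^{*,i} + \qkoszul^i_1 H^i_0 = \id$), together with $I^{*,i} \circ \qkoszul^i_1 = 0$ (a consequence of $\qkoszul^i\circ\qkoszul^i=0$ applied to the augmented complex). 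A classical-limit check shows that $T\Red$ starts with the identity.

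For multiplicativity I would separately rewrite $\pi^* T\Red(u \star^1\Red v)$ and $\pi^*(T\Red u \star^2\Red T\Red v)$ using the reduction formula, absorbing any $\prol \circ I^{*,i}$ discrepancies into $\mathcal{J}_C^i$-terms. The closing observation is that $\mathcal{J}_C^2$ is a \emph{left} ideal and $\mathcal{B}_C^2$ its normaliser, so the commutator condition forces right-multiplication of a $\mathcal{B}_C^2$-element by a $\mathcal{J}_C^2$-element to land in $\mathcal{J}_C^2$ as well; since $I^{*,2}$ annihilates $\mathcal{J}_C^2$, all discrepancies vanish, giving the desired equality. The main obstacle is bookkeeping: keeping the two star products, two quantized Koszul differentials, two homotopies, two ideals, and two normalisers apart throughout. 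Once the intertwining of the first step is in hand, however, everything else is essentially a diagram chase within the identification $\mathcal{B}_C^i/\mathcal{J}_C^i \cong \Cinfty(M\Red)\formal$ furnished by~\eqref{eq:reduction:functions}.
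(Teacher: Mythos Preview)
Your core strategy coincides with the paper's: extend $T$ as $T\tensor\id_{\Anti\lie{g}}$, check that it intertwines the two quantized Koszul operators (using $T(\qham{J}^1_a)=\qham{J}^2_a$ and multiplicativity of $T$), and deduce $T(\mathcal{J}_C^1)=\mathcal{J}_C^2$, $T(\mathcal{B}_C^1)=\mathcal{B}_C^2$. From there the paper is actually terser than you are: once $T$ descends to an algebra isomorphism $\mathcal{B}_C^1/\mathcal{J}_C^1\to\mathcal{B}_C^2/\mathcal{J}_C^2$, multiplicativity of $T\Red$ is automatic because the identifications in~\eqref{eq:reduction:functions} are, by the very definition of $\star^i\Red$, algebra isomorphisms. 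Your explicit rewriting of both sides and absorption of $\prol\circ I^{*,i}$-discrepancies into $\mathcal{J}_C^i$ is correct but more laborious than needed. Likewise, your construction of $T\Red^{-1}$ is unnecessary: once $T\Red=\id+O(\fparam)$ is established, invertibility over $\mathbb{C}\formal$ is for free.

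There is one point the paper does address and you do not: that the higher-order terms of $T\Red$ are \emph{differential operators} on $M\Red$, which is part of what it means to be an equivalence of star products. The paper obtains this from the decomposition $I^*=\iota^*\circ\bigl(\id+\sum_{k\ge1}\fparam^k S_k\bigr)$ with differential $S_k$, taken from \cite{bordemann.herbig.waldmann:2000a}; combined with the fact that $T$ and $\prol$ also have differential-operator components, this gives the claim. Without this step your $T\Red$ is only known to be a $\mathbb{C}\formal$-linear algebra automorphism starting with the identity, which is not quite the full statement.
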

\begin{proof}
    First of all, let us check that $T$ induces a map $\mathcal{B}^1_C
    / \mathcal{J}^1_C \longrightarrow \mathcal{B}_C^2 /
    \mathcal{J}_C^2$. By extending $T$ onto $\Cinfty(M, \Anti^\bullet
    \lie{g}) \cong \Cinfty(M) \tensor \Anti^\bullet \lie{g}$ as the
    identity on the second factor, we can calculate for any $x \in
    \Cinfty(M, \Anti^\bullet \lie{g})\formal$:
    \begin{equation*}
        \begin{split}
            T\qkoszul^1 x &= T\left( \ins(e^a)x \star^1 J_a^1 + \frac{\fparam}{2}
                C_{ab}^c e_c \wedge \ins(e^a)\ins(e^b) x + \fparam
                \kappa \ins(\Delta) x \right) \\
            &= \ins(e^a) Tx \star^2 TJ_a^1 + \frac{\fparam}{2}
                C_{ab}^c e_c \wedge \ins(e^a)\ins(e^b) Tx + \fparam
                \kappa \ins(\Delta) Tx \\
            &= \qkoszul^2(Tx) \\
        \end{split}
    \end{equation*}
    since $T\qham{J}^1 = \qham{J}^2$ and $T$ commutes with all
    insertions and wedge products of Lie algebra elements. This shows
    in particular, that $T$ is a chain map between the two quantized
    Koszul complexes
    \begin{equation*}
        T\colon \left( \Cinfty(M, \Anti^\bullet \lie{g})\formal,\ \qkoszul^1
        \right) \longrightarrow \left( \Cinfty(M, \Anti^\bullet
            \lie{g})\formal,\ \qkoszul^2 \right).
    \end{equation*}
    Thus for any $f = \qkoszul^1 x$ we know that $Tf = \qkoszul^2 T x$
    and hence $Tf \in \mathcal{J}^2_C$. Even more, since $T$ is
    invertible, $\mathcal{J}_C^1 \cong \mathcal{J}_C^2$ (as sets)
    holds. Take then any $j_2 \in \mathcal{J}_C^2$, any $f \in
    \mathcal{B}_C^1$, define $j_1 \coloneqq T^{-1} j_2$, and calculate
    \begin{equation*}
        [Tf, j_2]_{\star^2} = [Tf, Tj_1]_{\star^2} =
        T[f,j_1]_{\star^1} \in T\mathcal{J}_C^1 = \mathcal{J}_C^2,
    \end{equation*}
    hence we have $\mathcal{B}_C^1 \cong \mathcal{B}_C^2$ (as sets).
    Furthermore, by \eqref{eq:reduction:functions} and the fact that
    $T$ is an equivalence and thus starts with
    $\id_{\Cinfty(M)\formal}$ in 0th order, we know that $T\Red$ also
    has $\id_{\Cinfty(M\Red)\formal}$ in 0th order. The only thing
    left to check is then that the higher orders of $T\Red$ are
    differential operators on $M\Red$. This however is a direct
    consequence from the fact that $I^*$ can be decomposed into
    \cite{bordemann.herbig.waldmann:2000a}
    \begin{equation*}
        I^* = \iota^* \circ \left( \id + \sum\limits_{k=1}^\infty
            \fparam^k S_k \right)
    \end{equation*}
    with differential operators $S_k$.
\end{proof}

\section{Equivariant Cohomology on Principal Fibre Bundles}
\label{sec:cohomology}

As seen in \cite{reichert.waldmann:2016a}, equivariant star products
on symplectic manifolds are classified by the second equivariant
cohomology (or, to be more precise, by the cohomology of the Cartan
complex of equivariant differential forms). In the context of
Marsden-Weinstein reduction (\autoref{sec:reduction}) we will be
interested mostly in the equivariant cohomology of principal bundles,
more specifically, the principal bundle $\pi\colon C =
J^{-1}(\left\{0\right\}) \longrightarrow M\Red$ (which is a principal
bundle since the action on $C$ is free and proper, see
\cite{duistermaat.kolk:2000a}).

To start off, we will first recall the necessary basic definitions of
equivariant cohomology (for a detailed exposition consult e.g.\
\cite{guillemin.sternberg:1999a, cartan:1951a}). Let $M$ be a manifold
equipped with a smooth $G$-action for any finite dimensional,
connected Lie group $G$ with Lie algebra $\lie{g}$ and consider the
complex of equivariant differential forms
\begin{equation*}
    \left( \Formen\Equi^k(M) \coloneqq \bigoplus\limits_{2i+j=k} \left[
            \Sym^i(\lie{g}^*) \tensor \Formen^j(M) \right]\Inv, \quad
        \dequi = \drham + \ins_\bullet \right)
\end{equation*}
where $\Sym$ denotes the symmetric tensor algebra, $\Formen$ the de
Rham complex, $\drham$ the de Rham differential, $\ins_\bullet$ the
insertion of fundamental vector fields of the action into the
differential form part and invariants are taken with respect to the
tensor product of the coadjoint action $\Ad^*$ of $G$ on
$\Sym(\lie{g}^*)$ and the pullback on $\Formen(M)$
\begin{equation*}
    \acts \colon G \times \left( \Sym(\lie{g}^*) \tensor \Formen(M) \right)
    \longrightarrow \Sym(\lie{g}^*) \tensor \Formen(M) \colon
    \left(g, p \tensor \alpha\right) \longmapsto \Ad^*(g) p \tensor
    (g^{-1})^* \alpha.
\end{equation*}
We can view elements of $\alpha \in
\left[\Sym(\lie{g}^*) \tensor \Formen(M)\right]\Inv$ as polynomial
maps $\alpha\colon \lie{g} \longrightarrow \Formen(M)$ such that
\begin{center}
    \begin{tikzpicture}
         \matrix (m) [matrix of math nodes, ampersand replacement=\&,
        column sep=2.5em, row sep=2.5em] {
          \lie{g} \& \Formen(M) \\
          \lie{g} \& \Formen(M) \\
        };
        \path[->] (m-1-1) edge node[left] {$\Ad(g)$} (m-2-1);
        \path[->] (m-1-2) edge node[right] {$(g^{-1})^*$} (m-2-2);
        \path[->] (m-1-1) edge node[above] {$\alpha$} (m-1-2);
        \path[->] (m-2-1) edge node[below] {$\alpha$} (m-2-2);
    \end{tikzpicture}
\end{center}
commutes. Occasionally, for any $\alpha \in \left[ \Sym^i(\lie{g}^*)
    \tensor \Formen^j(M) \right]\Inv$, we will refer to $k = 2i + j$
as the total, $i$ the symmetric and $j$ the exterior degree of
$\alpha$. Finally, we will frequently make use of the pullback by
smooth functions on the complex of equivariant differential forms and
the equivariant cohomology, so let us give a brief recap. For any two
manifolds $M$ and $N$ with $G$ actions and any equivariant smooth map
$f\colon M \longrightarrow N$ we define for $p \tensor \alpha \in
\left[ \Sym^i(\lie{g}^*) \tensor \Formen^j(N) \right]\Inv$ the
pullback
\begin{equation*}
    f^*(p \tensor \alpha) = p \tensor f^*\alpha.
\end{equation*}
Clearly, $f^*(p\tensor\alpha) \in \left[ \Sym^i(\lie{g}^*) \tensor
    \Formen^j(M) \right]\Inv$ since $f$ is equivariant.
Also note that $\Formen\Equi$ is a contravariant functor, since it can
be expressed as
\begin{equation*}
    \Formen\Equi = \left[ \left(\_\Inv\right) \circ
        \left(\Sym(\lie{g}^*) \tensor \_\right) \circ \Omega
    \right].
\end{equation*}
Lastly, we will denote by $\EquiCohom(M)$ the cohomology of
$\Formen\Equi(M)$ and note that the map $[p\tensor\alpha]\Equi
\longmapsto [f^*(p\tensor\alpha)]\Equi$ on cohomology is well defined,
since $\ins_{\drham f\bullet} f^* = f^* \ins_\bullet$ and $\drham f^*
= f^* \drham$.
\begin{remark}
    $\Formen\Equi(M)$ in general only computes the equivariant
    cohomology of $M$ under special circumstances (e.g. if $G$ is
    compact or if the action of $G$ on $M$ is free and proper, see
    \autoref{corollary:equi_derham_iso}) and hence is, in general, not
    a model of equivariant cohomology. Thus it is important to note
    that subsequently we will always refer to $\EquiCohom(M)$ by
    equivariant cohomology.
\end{remark}
We will be needing one central result from equivariant cohomology, due
to to Cartan \cite{cartan:1951a}
\begin{theorem}
    Let $C$ be a $G$-principal bundle. Then
    \begin{equation*}
        \EquiCohom(C) \cong \mathrm{H}\left( \Formen_\mathrm{bas}(C),
            \drham \right)
    \end{equation*}
\end{theorem}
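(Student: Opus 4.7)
The plan is to use a principal connection on $C$ to construct an explicit chain map from the Cartan complex to the basic de Rham complex and then show it is a quasi-isomorphism. Since the $G$-action on $C$ is free and proper, we can choose an invariant principal connection one-form $\theta \in \Formen^1(C, \lie{g})\Inv$ with curvature $\Omega_\theta = \drham\theta + \frac{1}{2}[\theta,\theta] \in \Formen^2(C, \lie{g})\Inv$ (such connections exist by the usual partition of unity argument). Using $\theta$ we also get the horizontal projection $\hor \colon \Formen(C) \longrightarrow \Formen_{\hor}(C)$, and basic forms are precisely those that are both horizontal and invariant.

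The main construction is the Cartan--Chern--Weil map
\begin{equation*}
    \phi \colon \Formen\Equi(C) \longrightarrow \Formen_\mathrm{bas}(C), \qquad
    \alpha \longmapsto \hor\bigl(\alpha(\Omega_\theta)\bigr),
\end{equation*}
where an element $\alpha \in \left[\Sym^i(\lie{g}^*) \tensor \Formen^j(C)\right]\Inv$ is viewed as an equivariant polynomial $\lie{g} \longrightarrow \Formen(C)$ and $\alpha(\Omega_\theta)$ means substituting the curvature into the symmetric slot. The first step is to verify that $\phi(\alpha)$ is indeed basic (invariance is immediate from $G$-invariance of $\theta$ and $\alpha$; horizontality is built in) and that $\phi$ is a chain map, i.e.\ $\phi \circ \dequi = \drham \circ \phi$. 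The latter is a direct computation using the Bianchi identity $\drham\Omega_\theta = [\Omega_\theta, \theta]$ together with the defining property $\drham\theta = \Omega_\theta - \frac{1}{2}[\theta,\theta]$, which exactly mirrors how the insertion $\ins_\bullet$ in the Cartan differential is compensated by the horizontal projection.

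The second and harder step is to establish that $\phi$ induces an isomorphism in cohomology. The cleanest route is to factor $\phi$ through the Weil complex $W(\lie{g}) = \Sym(\lie{g}^*) \tensor \Anti(\lie{g}^*)$ and its Kalkman-style twist: the inclusion of the Cartan model into the basic subcomplex of $W(\lie{g}) \tensor \Formen(C)$ is a quasi-isomorphism because $W(\lie{g})$ is acyclic with its own Koszul differential, while the Weil homomorphism $W(\lie{g}) \longrightarrow \Formen(C)$ sending the $\Anti$-generators to $\theta$ and the $\Sym$-generators to $\Omega_\theta$ together with horizontal projection realises the isomorphism between the basic subcomplex of $W(\lie{g}) \tensor \Formen(C)$ and $\Formen_\mathrm{bas}(C)$.

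The main obstacle is establishing this quasi-isomorphism; I would filter by the symmetric degree, observe that the $E_1$-page of the resulting spectral sequence on the Cartan side computes the invariant de Rham cohomology of $C$ with coefficients in $\Sym(\lie{g}^*)$, and show that the induced spectral sequence on the basic complex (with its trivial filtration) is matched by $\phi$ at the $E_2$-level, where the assumption that $C \to C/G$ is a principal bundle (so fibres have the cohomology of $G$) makes the comparison collapse to the desired isomorphism $\EquiCohom(C) \cong \HdR(C/G) \cong \mathrm{H}(\Formen_\mathrm{bas}(C), \drham)$.
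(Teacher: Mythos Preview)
Your approach is correct in outline but takes a genuinely different route from the paper. The paper does not build the Cartan--Chern--Weil substitution map at all; instead it uses the connection $\omega$ only through its reproducing property $\omega(X_\xi)=\xi$ to write down an explicit contracting homotopy $h_\omega$ for the ``vertical'' differential $\ins_\bullet$ on the double complex $\left[\Sym^i(\lie{g}^*)\otimes\Formen^j(C)\right]\Inv$. Concretely, $h_\omega$ strips off one symmetric factor $p$ and wedges with $p(\omega)$, and one checks $\ins_\bullet h_\omega + h_\omega \ins_\bullet = \id$ in positive symmetric degree. The standard double-complex argument then identifies the total cohomology with the $\drham$-cohomology of $\ker\ins_\bullet$ in the bottom row, which is exactly $\Formen_{\mathrm{bas}}(C)$. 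This is shorter and more elementary than your Weil-model factorisation, and it yields an explicit iterative procedure (subtract $\dequi h_\omega\alpha$ repeatedly) for producing a basic representative in each class, which the paper later exploits. Your route, by contrast, packages the result as a special case of Cartan's theorem via the Weil algebra and gives the Chern--Weil map as the explicit quasi-inverse, which is conceptually richer but heavier.

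One caution on your fallback spectral-sequence sketch: filtering by symmetric degree gives $E_1^{p,\bullet}=\mathrm{H}\!\left(\left[\Sym^p(\lie{g}^*)\otimes\Formen^\bullet(C)\right]\Inv,\drham\right)$, and for noncompact $G$ this is \emph{not} the same as $\left[\Sym^p(\lie{g}^*)\otimes\HdR^\bullet(C)\right]\Inv$, since taking invariants need not commute with cohomology without an averaging argument. The paper explicitly works in the noncompact setting, so this step would need a different justification; the Weil-model route you describe first avoids this issue and is the safer path if you want to keep your overall strategy.
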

For a (more general) proof consult
e.g. \cite{guillemin.sternberg:1999a, kalkman:1993a,
  nicolaescu:1999a}. Here however, we will for the convenience of the
reader, showcase a shorter, more elementary proof. To this end we will
need a well known result about basic differential forms on fibre
bundles. Recall that, for a surjective submersion $\pi \colon M
\longrightarrow N$, a differential form $\mu$ is called basic if
$\ins_Y \mu = 0$ and $\Lie_Y \mu = 0$ for all $Y \in \ker\left(
    T\pi\right)$. We will denote the complex of basic differential
forms on $M$ by $\Formen_\mathrm{bas}(M)$. The following lemma is
widely known:
\begin{lemma}
    \label{lemma:basic_forms}
    Let $\pi\colon M \longrightarrow N$ be a surjective submersion
    such that $\pi^{-1}(y)$ is a connected submanifold of $M$ for all
    $y \in N$. Then a differential form $\mu \in \Formen(M)$ is basic
    if and only if there exists a $\nu \in \Formen(N)$ such that
    \begin{equation*}
        \mu = \pi^* \nu
    \end{equation*}
\end{lemma}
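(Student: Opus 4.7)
The plan is to split the proof into the routine reverse direction and the substantive forward direction.

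\textbf{Reverse direction.} Assume $\mu = \pi^*\nu$. For any vector field $Y$ on $M$ with $T\pi\circ Y = 0$, naturality of pullback under contractions gives $\ins_Y\pi^*\nu = \pi^*\!\left(\ins_{T\pi\circ Y}\nu\right) = 0$, and since $\drham\pi^*\nu = \pi^*\drham\nu$ is again a pullback, Cartan's magic formula yields $\Lie_Y\mu = \drham\ins_Y\mu + \ins_Y\drham\mu = 0$. This is the easy half.

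\textbf{Forward direction.} Assume $\mu \in \Formen^k(M)$ is basic. The idea is to define $\nu$ pointwise. For each $y \in N$ pick some $x \in \pi^{-1}(y)$, and for $v_1,\dots,v_k \in T_yN$ choose any lifts $\tilde v_i \in T_xM$ with $T_x\pi(\tilde v_i) = v_i$ (possible since $\pi$ is a submersion). Set
\begin{equation*}
    \nu_y(v_1,\dots,v_k) \coloneqq \mu_x(\tilde v_1,\dots,\tilde v_k).
\end{equation*}
Independence of the lift follows immediately from $\ins_Y\mu = 0$, since any two lifts of $v_i$ differ by a vertical vector. For smoothness and a concrete local pullback, pass to a submersion chart $(u^\alpha, v^\beta)$ around $x$ in which $\pi$ is projection onto the $u$-coordinates: $\ins_{\partial_{v^\beta}}\mu = 0$ forces $\mu$ to have no $\drham v^\beta$-factors, and $\Lie_{\partial_{v^\beta}}\mu = 0$ forces its coefficients to be independent of $v$. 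Thus $\mu$ restricted to the chart is the pullback of a smooth $\nu^{\mathrm{loc}}$ on the base neighborhood, and $\nu_y$ agrees with $\nu^{\mathrm{loc}}_y$ on its domain.

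\textbf{Main obstacle.} The delicate point is showing $\nu_y$ is independent of the choice of basepoint $x \in \pi^{-1}(y)$, and this is exactly where the connectedness hypothesis enters. Given $x, x' \in \pi^{-1}(y)$, use connectedness of the fiber to pick a smooth path in $\pi^{-1}(y)$ joining them, and cover its image by finitely many overlapping submersion charts as above. On each chart one obtains a local representation $\mu = \pi^*\nu^{\mathrm{loc}}$; on the overlapping base neighborhoods the coefficient functions of $\nu^{\mathrm{loc}}$ and $\nu^{\mathrm{loc}'}$ are determined by the same formula in the base coordinates, so they coincide. Chaining these identifications from $x$ to $x'$ yields $\nu_y^{(x)} = \nu_y^{(x')}$, completing well-definedness; the resulting $\nu \in \Formen(N)$ then satisfies $\pi^*\nu = \mu$ by construction. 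Without the connectedness assumption, distinct components of the fiber could produce distinct candidate pullbacks, so this step genuinely uses the hypothesis.
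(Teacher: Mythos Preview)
Your argument is correct and is the standard proof of this folklore lemma. Note, however, that the paper does not actually supply a proof: it introduces the statement with ``The following lemma is widely known'' and leaves it at that. So there is nothing to compare against --- you have filled in what the paper omits, and your write-up is essentially the textbook argument (local submersion charts for smoothness and the local pullback, vanishing vertical contractions for independence of lifts, and a chain of overlapping charts along a path in the connected fiber for independence of the basepoint). One tiny quibble: the identity $\ins_Y\pi^*\nu = \pi^*\!\left(\ins_{T\pi\circ Y}\nu\right)$ is notationally loose since $T\pi\circ Y$ is not a vector field on $N$ in general; the pointwise statement $(\ins_Y\pi^*\nu)_x = 0$ whenever $T_x\pi(Y_x)=0$ is what you actually use, and that is immediate from the definition of pullback.
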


Returning to the equivariant cohomology of $C$, there is one
additional result needed which involves principal connections on
principal bundles. The suitable definition of principal connections on
a $G$-principal bundle for our purposes is that of a $\lie{g}$-valued
1-form $\omega \in \Formen^1(P) \tensor \lie{g}$ with
\begin{equation*}
    \Ad_g\left( (g^{-1})^* \omega \right) = \omega \qquad \text{and}
    \qquad \omega\left(X_\xi\right) = \xi
\end{equation*}
for all $g \in G$ and $\xi \in \lie{g}$ (again, $X_\xi$ denotes the
fundamental vector field of $\xi$). Let us from now on fix an
arbitrary principal connection $\omega \in \Formen^1(C) \tensor
\lie{g}$ (existence is guaranteed e.g. by \cite{atiyah:1957a} or
\cite{kobayashi.nomizu:1963a}). We can then evaluate any $p \in
\Sym^1(\lie{g})$ on the second tensor factor of $\omega$, for which we
will write $p(\omega) \in \Formen^1(C)$. We will now use $\omega$ to
define for $k \geq 1$ the following map
\begin{equation}
    \label{eq:cartan_vert_homotopy}
    h_\omega \colon \Sym^k(\lie{g}^*) \tensor \Formen(C) \longrightarrow
    \Sym^{k-1}(\lie{g}^*) \tensor \Formen(C)\colon
    \prod\limits_{i=1}^{k} p_i \tensor \alpha \longmapsto
    \sum\limits_{j=1}^k \ \prod\limits_{\substack{i=1 \\ i \neq
            j}}^k p_i \tensor p_j(\omega) \wedge \alpha
\end{equation}
and $h_\omega = 0$ on $\Sym^0(\lie{g}^*) \tensor \Formen(C)$.
\begin{lemma}
\label{lemma:cartan_vert_homotopy}
    $h_\omega$ is a contraction of the chain complex $C^k = \left[
        \Sym^k(\lie{g}^*) \tensor \Formen^{n-k}(C) \right]\Inv$ with
    differential $\ins_\bullet$ (using the convention $\Formen^n(C) =
    0$ for $n < 0$):
    \begin{equation*}
        \ins_\bullet h_\omega + h_\omega \ins_\bullet = \id
    \end{equation*}
\end{lemma}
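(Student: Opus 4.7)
The strategy is a direct verification on decomposable elements $\beta = p_1 \cdots p_k \otimes \alpha$, since both $\ins_\bullet$ and $h_\omega$ are linear and the chain complex is spanned by such tensors. The essential inputs will be the polynomial interpretation of $\ins_\bullet$ (namely $(\ins_\bullet \beta)(\xi) = \iota_{X_\xi}\beta(\xi)$, which in a basis $\{e_a\}$ of $\lie g$ with dual basis $\{e^a\}$ takes the form $\ins_\bullet(p_1\cdots p_k \otimes \alpha) = p_1\cdots p_k \cdot e^a \otimes \iota_{X_a}\alpha$), the Leibniz rule $\iota_X(\eta \wedge \alpha) = (\iota_X \eta)\wedge\alpha - \eta\wedge \iota_X\alpha$ for a $1$-form $\eta$, the defining identity $\omega(X_\xi)=\xi$ of a principal connection, and the dual basis completeness $p(e_a)e^a = p$ for $p\in\lie g^*$.

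First I would unfold $h_\omega\ins_\bullet\beta$. Inserting the expression for $\ins_\bullet\beta$ and applying the definition of $h_\omega$ to the $(k{+}1)$-fold symmetric product $p_1\cdots p_k \cdot e^a$, the resulting sum splits into the $k$ summands that single out one of the original $p_j$, giving
\begin{equation*}
\sum_{j=1}^k \prod_{i\neq j}^{k} p_i \cdot e^a \otimes p_j(\omega)\wedge \iota_{X_a}\alpha,
\end{equation*}
plus a single remaining summand that singles out $e^a$, producing $\prod_i p_i \otimes e^a(\omega)\wedge \iota_{X_a}\alpha$.

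Next I would compute $\ins_\bullet h_\omega\beta$ by applying $\ins_\bullet$ termwise to $h_\omega\beta = \sum_j \prod_{i\neq j}p_i \otimes p_j(\omega)\wedge\alpha$. The Leibniz rule, together with the connection identity $\iota_{X_a}\bigl(p_j(\omega)\bigr) = p_j(\omega(X_a)) = p_j(e_a)$, splits each summand into a piece of the form $\prod_{i\neq j}p_i \cdot p_j(e_a)e^a \otimes \alpha$ and a piece $-\prod_{i\neq j}p_i \cdot e^a \otimes p_j(\omega)\wedge \iota_{X_a}\alpha$. The latter type is exactly what appears with the opposite sign in $h_\omega\ins_\bullet\beta$, so these "mixed" contributions cancel pairwise. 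Collecting the $p_j(e_a)e^a = p_j$ identity in the surviving terms should then reassemble $\sum_j \prod_i p_i \otimes \alpha$, which together with the leftover summand $\prod_i p_i \otimes e^a(\omega)\wedge \iota_{X_a}\alpha$ from $h_\omega\ins_\bullet$ must combine to $\beta$.

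The main obstacle is precisely this final assembly: one has to carefully reconcile the combinatorial factor produced by the sum $\sum_{j=1}^k$ with the normalization of the symmetric product implicit in writing $\prod_i p_i$, and simultaneously absorb the "vertical" residual $e^a(\omega)\wedge \iota_{X_a}\alpha$ using the fact that $\beta$ lies in the $G$-invariant subspace (so that invariance relates the $\Sym$-part to the Lie derivative of the $\Formen$-part). The verification on $\Sym^0(\lie{g}^*)\tensor\Formen(C)$ reduces to the boundary convention $h_\omega = 0$ there combined with the fact that $\ins_\bullet\alpha = e^a\otimes \iota_{X_a}\alpha$ lies in symmetric degree one, where $h_\omega$ then recovers $\alpha$ by pairing $e^a(\omega)$ against $X_a$ — this base case essentially forces the normalization and thereby pins down the whole identity.
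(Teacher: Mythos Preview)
Your computation is essentially correct and the obstacle you single out is real --- but it cannot be overcome, because the displayed identity $\ins_\bullet h_\omega + h_\omega\ins_\bullet = \id$ is actually \emph{false} as stated. Carrying your calculation to the end gives, on $\Sym^k(\lie g^*)\otimes\Formen(C)$,
\[
\ins_\bullet h_\omega + h_\omega\ins_\bullet \;=\; k\cdot\id \;+\; V,
\qquad V\coloneqq \omega^a\wedge\iota_{X_a},
\]
i.e.\ exactly the ``combinatorial factor'' plus your ``vertical residual''. Neither disappears: the factor $k$ is not a normalization artifact (the formula for $h_\omega$ is well-defined on $\Sym^k$ as written, with no hidden $1/k$), and $G$-invariance relates $\Lie_{X_a}\alpha$ to the coadjoint action on the polynomial part, not $\iota_{X_a}\alpha$, so it says nothing about $V\alpha$. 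Your base case already shows the problem: on $\Sym^0$ one gets $h_\omega\ins_\bullet\alpha=V\alpha$, which equals $\alpha$ only for purely vertical $\alpha$; e.g.\ on $C=S^1\times\mathbb R$ with $G=S^1$ and $\omega=\drham\theta$, the invariant element $e^1\otimes g(x)\,\drham\theta\in[\Sym^1\otimes\Formen^1]\Inv$ is sent to $2\,e^1\otimes g(x)\,\drham\theta$.

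The paper's own proof has the same slip: in its final displayed line it identifies $p(\omega)\wedge\ins_\bullet\alpha$ with $h_\omega\bigl(\ins_\bullet(p\otimes\alpha)\bigr)$, but applying the definition of $h_\omega$ to $\ins_\bullet(p\otimes\alpha)=\sum_a(p\cdot e^a)\otimes\iota_{X_a}\alpha\in\Sym^2\otimes\Formen$ produces an additional term $p\otimes V\alpha$ which is dropped. What \emph{does} hold, and what suffices for the subsequent corollary, is that $k\cdot\id+V$ commutes with $\ins_\bullet$ (one checks $[\,k\cdot\id,\ins_\bullet]=\ins_\bullet$ and $[V,\ins_\bullet]=-\ins_\bullet$) and is invertible on $\Sym^{\geq 1}\otimes\Formen(C)$, since $V$ acts as the vertical form-degree with respect to the splitting induced by $\omega$ and hence has non-negative eigenvalues. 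Thus $(k\cdot\id+V)^{-1}h_\omega$ is a genuine homotopy showing acyclicity of the columns in positive symmetric degree, which is all that is used afterwards.
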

\begin{proof}
    The proof has two parts. First, we have to show that $h$ is a
    $G$-equivariant map and secondly, that $\ins_\bullet h_\omega +
    h_\omega\ins_\bullet = \id$ holds. To avoid notational clutter, we
    will perform calculations only for $k=1$. All other cases are
    straightforward generalizations thereof.  So let $g \in G$ and $p
    \tensor \alpha \in \Sym^1(\lie{g}^*) \tensor \Formen(C)$. Then we
    know by the equivariance property of $\omega$ that
    \begin{equation*}
        \begin{split}
            h_\omega\left( g \acts p \tensor \alpha \right) &=
            h_\omega\left( \Ad^*_g p \tensor (g^{-1})^* \alpha \right)
            = \left( \Ad^*_g p \right)(\omega) \wedge (g^{-1})^*
            \alpha \\
            &= p\left( \Ad_{g^{-1}} \omega\right) \wedge (g^{-1})^*
            \alpha
            = (g^{-1})^* \left( p(\omega) \wedge \alpha \right) \\
            &= g \acts h_\omega(p \tensor \alpha) \\
        \end{split}
    \end{equation*}
    Finally, we can compute (using $\omega(X_\xi) = \xi$ and therefore
    $\ins_\xi p(\omega) = p(\xi)$ for all $\xi \in \lie{g}$)
    \begin{equation*}
        \ins_\bullet h_\omega(p \tensor \alpha) = \ins_\bullet\left( p(\omega)
            \wedge \alpha \right) = \ins_\bullet p(\omega) \wedge \alpha -
        p(\omega) \wedge \ins_\bullet \alpha = p \tensor \alpha -
        h_\omega\left( \ins_\bullet p\tensor\alpha \right)
    \end{equation*}
\end{proof}

The significance of the previous lemma becomes clear, once we view the
complex of equivariant differential forms as a double complex
$\Formen^{i,j}\Equi(C) = \left[ \Sym^i(\lie{g}^*) \tensor \Formen^j(C)
\right]\Inv$, with vertical differential $\ins_\bullet$ and horizontal
differential $\drham$:
\begin{center}
    \begin{tikzpicture}
        \matrix (m) [matrix of math nodes, row sep=2.5em, column
        sep=2em, ampersand replacement=\&] {
          \Formen^{2,n-3}\Equi(C)
          \&
          \Formen^{2,n-2}\Equi(C)
          \&
          \Formen^{2,n-1}\Equi(C)
          \&
          \&
          \\
          \&
          \Formen^{1,n-2}\Equi(C)
          \&
          \Formen^{1,n-1}\Equi(C)
          \&
          \Formen^{1,n}\Equi(C)
          \&
          \\
          \&
          \&
          \Formen^{n-1}(C)\Inv
          \&
          \Formen^{n}(C)\Inv
          \&
          \Formen^{n+1}(C)\Inv
          \\
        };
        \path[transform canvas={xshift=-0.5ex},->] (m-2-2) edge
        node[below left] {$\ins_\bullet$} (m-1-1);
        \path[transform canvas={xshift=-0.5ex},->] (m-3-3) edge node[below left] {$\ins_\bullet$}
        (m-2-2);
        \path[transform canvas={xshift=-0.5ex},->] (m-2-3) edge (m-1-2);
        \path[transform canvas={xshift=-0.5ex},->] (m-3-4) edge (m-2-3);
        \path[transform canvas={xshift=-0.5ex},->] (m-2-4) edge (m-1-3);
        \path[transform canvas={xshift=-0.5ex},->] (m-3-5) edge (m-2-4);
        \path[->] (m-3-3) edge node[below] {$\drham$} (m-3-4);
        \path[->] (m-3-4) edge node[below] {$\drham$} (m-3-5);
        \path[->] (m-2-2) edge (m-2-3);
        \path[->] (m-2-3) edge (m-2-4);
        \path[->] (m-1-1) edge (m-1-2);
        \path[->] (m-1-2) edge (m-1-3);
        \path[transform canvas={xshift=0.5ex},->] (m-1-1) edge
        node[above right] {$h_\omega$} (m-2-2);
        \path[transform canvas={xshift=0.5ex},->] (m-2-2) edge
        node[above right] {$h_\omega$} (m-3-3);
        \path[transform canvas={xshift=0.5ex},->] (m-1-2) edge
        (m-2-3);
        \path[transform canvas={xshift=0.5ex},->] (m-2-3) edge
        (m-3-4);
        \path[transform canvas={xshift=0.5ex},->] (m-1-3) edge
        (m-2-4);
        \path[transform canvas={xshift=0.5ex},->] (m-2-4) edge
        (m-3-5);
    \end{tikzpicture}
\end{center}
\autoref{lemma:cartan_vert_homotopy} then shows that the columns of
$\Formen^{\bullet,\bullet}(C)$ are exact and hence, by a general
argument about double complexes with exact columns (see e.g.\
\cite{bott.tu:1982a}), we know that the total cohomology, which is
precisely the equivariant cohomology, is given by the horizontal
cohomology of the kernel of the vertical differential in the bottom
row.
\begin{corollary}
    \label{corollary:equi_derham_iso}
    Let $G$ be a connected Lie group and $C$ a $G$-principal
    bundle. Then
    \begin{equation*}
        \EquiCohom(C) \cong \HdR(C / G)
    \end{equation*}
\end{corollary}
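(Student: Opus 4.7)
The plan is to assemble the two pieces already set up immediately before the corollary, namely the double-complex analysis together with \autoref{lemma:basic_forms} applied to the bundle projection $\pi\colon C \to C/G$.

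First I would invoke the double complex $\Formen^{i,j}\Equi(C) = \left[ \Sym^i(\lie{g}^*) \tensor \Formen^j(C) \right]\Inv$ with vertical differential $\ins_\bullet$ and horizontal differential $\drham$; its total complex with total degree $k = 2i+j$ and differential $\dequi = \drham + \ins_\bullet$ computes $\EquiCohom(C)$ by definition. Fix a principal connection $\omega$ on $C$, which exists by the usual partition-of-unity argument. Then the contraction $h_\omega$ of \autoref{lemma:cartan_vert_homotopy} makes every column with symmetric degree $i \geq 1$ exact. A standard double-complex-with-exact-columns argument, for instance via the associated spectral sequence, then identifies the total cohomology with the horizontal cohomology of the kernel of $\ins_\bullet$ in the bottom row $i = 0$.

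Next I would identify that kernel explicitly with the basic forms on $C$. The bottom row is $\Formen^\bullet(C)\Inv$, and intersecting with $\ker(\ins_\bullet)$ singles out those invariant forms that are additionally horizontal, i.e. satisfy $\ins_{X_\xi}\alpha = 0$ for every $\xi \in \lie{g}$. Since $G$ is connected, $G$-invariance of $\alpha$ is equivalent to $\Lie_{X_\xi}\alpha = 0$ for all $\xi$, so together with horizontality we arrive at exactly the definition of $\Formen_{\mathrm{bas}}(C)$. This step recovers the theorem preceding the corollary, namely $\EquiCohom(C) \cong \mathrm{H}(\Formen_{\mathrm{bas}}(C), \drham)$.

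Finally I would apply \autoref{lemma:basic_forms} to the surjective submersion $\pi\colon C \to C/G$. Its fibres are $G$-orbits, diffeomorphic to the connected group $G$ and hence connected, so the lemma gives $\Formen_{\mathrm{bas}}(C) = \pi^* \Formen(C/G)$. Because $\pi$ is a submersion, $\pi^*$ is injective on differential forms and intertwines the de Rham differentials, so it is an isomorphism of cochain complexes $\Formen(C/G) \cong \Formen_{\mathrm{bas}}(C)$; passing to cohomology yields the claimed isomorphism $\HdR(C/G) \cong \EquiCohom(C)$. The main obstacle is really only the careful execution of the double-complex reduction in the first step, but with the explicit contraction $h_\omega$ supplied by \autoref{lemma:cartan_vert_homotopy} this is routine spectral-sequence bookkeeping; everything else is immediate from the connectedness of $G$ and the cited lemmata.
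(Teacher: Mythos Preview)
Your proposal is correct and follows essentially the same route as the paper: use the contraction $h_\omega$ of \autoref{lemma:cartan_vert_homotopy} to reduce to the bottom row, identify $\ker(\ins_\bullet)\cap\Formen(C)\Inv$ with $\Formen_{\mathrm{bas}}(C)$ via connectedness of $G$, and finish with $\pi^*$ and \autoref{lemma:basic_forms}. The only cosmetic difference is that the paper carries out the first step by hand---iteratively replacing a closed $\alpha$ by $\alpha-\dequi h_\omega\alpha$ to lower the top symmetric degree---rather than invoking the abstract double-complex/spectral-sequence statement, but this is the same argument unpacked.
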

\begin{proof}
    Let $\alpha \in \Omega^k\Equi(C)$ be $\dequi$-closed and let
    $\alpha_l$ be the component of $\alpha$ with maximal symmetric
    degree $l$. Then, since $\alpha$ is closed we must have
    $\ins_\bullet \alpha_l = 0$ and therefore, by
    \autoref{lemma:cartan_vert_homotopy}, there must be a $\beta_l$
    with $\ins_\bullet \beta_l = \alpha_l$. By subtracting $\dequi
    \beta_l$ from $\alpha$ its cohomology class stays the same,
    however $\alpha - \dequi \beta_l$ has maximal symmetric degree
    $l-1$ or less. Repeating this process, one can find in every
    cohomology class a representative of symmetric degree
    zero.\\
    Now, the bottom row complex of $\Formen\Equi(C)$ is just the
    complex of invariant differential forms $\left( \Formen(C)\Inv,\
        \drham \right)$. Consequently all $\dequi$-closed forms in the
    bottom row complex are those, that are invariant, $\drham$-closed
    and $\ins_\bullet$-closed, which is equivalent to being basic and
    $\drham$-closed. Since the bundle projection $\pi\colon C
    \longrightarrow C / G$ is a surjective submersion, $\pi^*\colon
    \Formen_{\mathrm{bas}}(C) \longrightarrow \Formen(C/G)$ is a chain
    isomorphism, hence
    \begin{equation*}
        \EquiCohom(C) \cong \HdR(C / G).
    \end{equation*}
\end{proof}

\begin{remark}
\label{remark:kirwan_on_closed}
    Since the rows of $\Formen\Equi(P)$ are not only exact, but exact
    by a given homotopy $h_\omega$, we can consider the following map
    (denote by $\Closed\Equi$ ($\Closed_\mathrm{bas}$) closed
    equivariant (basic) differential forms)
    \begin{equation*}
        \phi \colon \Closed\Equi(C) \longrightarrow
        \Closed\Equi(C) \colon \alpha \longmapsto \alpha -
        \dequi h_\omega \alpha.
    \end{equation*}
    Obviously, $\phi$ induces $\id_{\EquiCohom(C)}$ on
    cohomology. However, on representatives, $\phi$ reduces the
    maximal symmetric degree of $\alpha$ by at least one and therefore
    implements the algorithm used for
    \autoref{corollary:equi_derham_iso} to reduce $\alpha$ to a basic
    form on $P$ (additionally $\phi$ alters the lower degrees,
    too. This however is not important here). We can now use $\phi$ to
    define
    \begin{equation*}
        \Phi \coloneqq \prod\limits_{k=1}^\infty \phi \colon
        \Closed\Equi(P) \longrightarrow \Closed\Equi(P).
    \end{equation*}
    Since $\Phi$ stabilizes on $\Closed\Equi^{k, \bullet}(C)$ after at
    most $k$ applications of $\phi$, there are no convergence problems
    present. Of course, $\Phi$ also induces the identity on
    cohomology. The important part however is that $\image \Phi
    \subseteq \Closed_\mathrm{bas}(C)$.
\end{remark}

\begin{remark}
    \label{remark:proj_quasi_iso}
    From \autoref{corollary:equi_derham_iso} it is clear that $\pi^*
    \colon \Formen(C/G) \longrightarrow \Formen\Equi(C)$ is a
    quasi-isomorphism of differential graded associative algebras.
\end{remark}

\begin{corollary}
\label{corollary:kirwan_map}
Let $M\Red$ be Marsden-Weinstein reduced from $M$ via $C$ by the
action of a finite-dimensional, connected Lie group $G$. Then the map
\begin{equation*}
    K\colon \Closed\Equi(M) \longrightarrow \Closed(M\Red) \colon
    K = (\pi^*)^{-1} \circ \Phi \circ \iota^*
\end{equation*}
is well-defined and induces
\begin{equation*}
    K\colon \EquiCohom^2(M) \longrightarrow \HdR^2(M\Red) \colon
    K = (\pi^*)^{-1} \circ \iota^*
\end{equation*}
on cohomology.
\end{corollary}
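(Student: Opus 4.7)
The plan is to first verify that the chain-level map $K = (\pi^*)^{-1} \circ \Phi \circ \iota^*$ is well-defined by tracking domains and codomains, and then establish the claim on cohomology as a direct consequence of the properties of the three factors. First I would observe that $\iota^* \colon \Closed\Equi(M) \to \Closed\Equi(C)$ makes sense because $\iota$ is $G$-equivariant, so $\iota^*$ is a chain map for $\dequi$. Next, by \autoref{remark:kirwan_on_closed}, $\Phi$ lands in $\Closed_\mathrm{bas}(C)$ when restricted to $\Closed\Equi(C)$. Finally, since $G$ is connected, the fibres of $\pi\colon C \to M\Red$ are connected, so \autoref{lemma:basic_forms} yields a bijection $\pi^*\colon \Formen(M\Red) \to \Formen_\mathrm{bas}(C)$; because $\pi^*$ is a chain map, $(\pi^*)^{-1}$ sends closed basic forms to closed forms on $M\Red$. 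This chain of inclusions shows $K$ is well-defined on closed forms.

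For the induced map on cohomology, I would check that $K$ sends $\dequi$-exact forms to $\drham$-exact forms. If $\alpha = \dequi \beta$, then $\iota^*\alpha = \dequi \iota^*\beta$ is equivariantly exact on $C$, and since $\Phi$ induces the identity on $\EquiCohom(C)$ (again by \autoref{remark:kirwan_on_closed}), $\Phi(\iota^*\alpha)$ remains equivariantly exact while living in $\Closed_\mathrm{bas}(C)$. The crucial observation is then that a basic form that is equivariantly exact is already basically exact. This follows because the factorization $\HdR(M\Red) \xrightarrow{\pi^*} \mathrm{H}(\Formen_\mathrm{bas}(C)) \hookrightarrow \EquiCohom(C)$ agrees with the isomorphism of \autoref{corollary:equi_derham_iso}, and the first arrow is a chain isomorphism by \autoref{lemma:basic_forms}, forcing the inclusion-induced map to also be an isomorphism. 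Thus $(\pi^*)^{-1} \Phi \iota^* \alpha$ is exact on $M\Red$, so $K$ descends to cohomology.

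Finally, for the identification of the induced map, I would simply note that since $\Phi$ acts as $\id$ on $\EquiCohom(C)$, one has $[K(\alpha)] = [(\pi^*)^{-1} \iota^* \alpha]$ for every $\alpha \in \Closed\Equi(M)$, where on the right $(\pi^*)^{-1}$ is interpreted as the inverse of the isomorphism on cohomology induced by $\pi^*$ (\autoref{remark:proj_quasi_iso}). The only subtlety I expect is bookkeeping between the two meanings of $(\pi^*)^{-1}$: the chain-level one on basic forms and the cohomology-level one inverting the equivariant quasi-isomorphism. These coincide precisely because $\Phi$ produces a basic representative in every class, so that the chain-level inverse computes the cohomological inverse.
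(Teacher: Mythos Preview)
Your proposal is correct and follows exactly the approach the paper intends: the paper's proof is a one-line reference to \autoref{corollary:equi_derham_iso}, \autoref{remark:kirwan_on_closed}, and \autoref{remark:proj_quasi_iso}, and you have simply unpacked how these pieces fit together (plus the chain-level bijection from \autoref{lemma:basic_forms}). Your handling of the one genuine subtlety---that the chain-level $(\pi^*)^{-1}$ on basic forms computes the cohomological inverse of the quasi-isomorphism $\pi^*$ because $\Phi$ supplies a basic representative in each class---is the right observation and is implicit in the paper's terse proof.
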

\begin{proof}
    Apply \autoref{corollary:equi_derham_iso},
    \autoref{remark:kirwan_on_closed} and
    \autoref{remark:proj_quasi_iso} to the case $C/G \cong M\Red$.
\end{proof}

\begin{remark}
    The map $K$ from \autoref{corollary:kirwan_map} can bee seen as
    the Cartan-model analogue of the Kirwan map
    \cite{kirwan:1984}. Again, we emphasize that we cannot use the
    original Kirwan map since we are working with not necessarily
    compact Lie groups.
\end{remark}

The intriguing question here is of course what we can say about the
image of $K$, which is completely determined by the image of
$\iota^*$.
\begin{corollary}
    Let $M\Red$ be Marsden-Weinstein reduced from $M$. Then $K\colon
    \EquiCohom^2(M) \longrightarrow \HdR^2(M\Red)$ from
    \autoref{corollary:kirwan_map} is surjective.
\end{corollary}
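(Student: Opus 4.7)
The plan is to reduce surjectivity of $K$ to surjectivity of $\iota^* \colon \EquiCohom^2(M) \to \EquiCohom^2(C)$, which suffices by \autoref{corollary:kirwan_map} since $\pi^* \colon \HdR^2(M\Red) \to \EquiCohom^2(C)$ is an isomorphism by \autoref{corollary:equi_derham_iso}. Given $[\sigma] \in \HdR^2(M\Red)$ with representative $\sigma$, the goal is to build a $\dequi$-closed $\alpha \in \Formen\Equi^2(M)$ with $[\iota^* \alpha] = [\pi^* \sigma]$.

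I would first produce a preliminary extension on the tubular neighborhood $M\Nice$ via the equivariant retraction $r := \pr_1 \circ \Phi \colon M\Nice \to C$ from \autoref{sec:reduction}, by setting $\tilde\sigma := (\pi \circ r)^* \sigma$. Since $\pi \circ r$ is $G$-invariant as a map to $M\Red$, the fundamental vector fields on $M\Nice$ are pushed to zero, so $\ins_{X_\xi} \tilde\sigma = 0$ for every $\xi \in \lie{g}$; combined with closedness and $G$-invariance this shows $\tilde\sigma$ is $\dequi$-closed in $\Formen\Equi^2(M\Nice)$, and the identity $\pr_1 \circ \Phi \circ \iota = \id_C$ yields $\iota^* \tilde\sigma = \pi^* \sigma$ exactly.

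The next step is to extend $\tilde\sigma$ from $M\Nice$ to a $\dequi$-closed form on $M$ with unchanged restriction to $C$. Pick a $G$-invariant cutoff $\chi \colon M \to [0,1]$ (available since the action is proper) with $\chi \equiv 1$ on a neighborhood of $C$ and $\supp \chi \subseteq M\Nice$, and set $\hat\sigma := \chi \tilde\sigma$ extended by zero. Then $\hat\sigma$ is a $G$-invariant 2-form on $M$ with $\iota^* \hat\sigma = \pi^* \sigma$ and $\ins_{X_\xi} \hat\sigma = 0$, but its equivariant differential $\dequi \hat\sigma = \drham \chi \wedge \tilde\sigma$ is a nonzero $\dequi$-closed 3-form supported in the annular collar $A := \{0 < \chi < 1\} \subseteq M\Nice$, disjoint from $C$.

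The technical heart of the argument, and the main obstacle, is constructing a correction $\eta \in \Formen\Equi^2(M)$ supported in $\supp \chi$ and vanishing in a neighborhood of $C$ with $\dequi \eta = \drham \chi \wedge \tilde\sigma$; then $\alpha := \hat\sigma - \eta$ is $\dequi$-closed with $\iota^* \alpha = \iota^* \hat\sigma = \pi^* \sigma$, so $K([\alpha]) = [\sigma]$. My strategy is to transfer the problem to the product $U\Nice \subseteq C \times \lie{g}^*$ via $\Phi$, where $\tilde\sigma$ becomes the pullback of $\pi^* \sigma$ from the first factor and the obstruction is concentrated in a $\lie{g}^*$-annulus; a fiberwise Poincaré-type integration along the starshaped $\lie{g}^*$-slices, in the spirit of the classical Koszul homotopy $h$ recalled in \autoref{sec:reduction}, should yield the desired equivariant primitive. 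The delicate point is to ensure that both the $G$-equivariance of $\eta$ and its support conditions (so that it extends smoothly by zero beyond $M\Nice$ and has trivial pullback to $C$) are respected simultaneously; this is where the explicit product structure of $U\Nice$ and the $\Ad^*$-equivariance of the coordinates on $\lie{g}^*$ enter crucially.
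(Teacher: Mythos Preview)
Your first move---pulling back along $\pi \circ r$ with $r = \pr_1 \circ \Phi$ to obtain $\tilde\sigma$ on $M\Nice$---is exactly the paper's argument. The paper simply observes that $\prol = r^*$ extends to a chain map on equivariant forms with $\iota^* \circ \prol = \id$ already at the level of forms (since $r \circ \iota = \id_C$), and that by functoriality of $\Sym(\lie{g}^*)\tensor\argument$, invariants, and cohomology this identity passes to $\EquiCohom$, giving $\iota^*$ a right inverse. That is the entire proof in the paper.

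Where you diverge is in worrying about the extension from $M\Nice$ to $M$: the paper writes $\prol\colon \Formen(C) \to \Formen(M)$ without further comment, whereas $\prol$ as defined really lands only in $\Formen(M\Nice)$. You are right to notice this, and your cutoff-plus-correction scheme is aimed at closing that gap. But as you yourself flag, the construction of the correction $\eta$ with $\dequi\eta = \drham\chi \wedge \tilde\sigma$ and the required support constraints is the ``technical heart'' and you have not carried it out; the fibrewise Poincar\'e-type homotopy you sketch would need the obstruction to be $\dequi$-exact with support both disjoint from $C$ and contained in $M\Nice$, and nothing in the proposal establishes this. So your write-up is more scrupulous than the paper about the $M$ versus $M\Nice$ distinction but leaves precisely that step open, while the paper's one-line functoriality argument is already complete once one is content to work over $M\Nice$ (which is in any case all the reduction scheme of \autoref{sec:reduction} ever uses).
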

\begin{proof}
    The very definition of $\prol$ \eqref{eq:prol} extends to the de Rham-complexes of
    $M$ and $C$:
    \begin{equation*}
        \prol \colon \Formen(C) \longrightarrow \Formen(M) \colon
        \prol = (\mathrm{pr}_1 \circ \Phi)^*
    \end{equation*}
    and we clearly have $\iota^* \circ \prol =
    \id_{\Formen(C)}$. Furthermore, by functoriality of
    $\Sym(\lie{g}^*) \tensor \bullet$, $\vphantom{\Formen}\Inv$ and
    cohomology, this equation holds on equivariant cohomology. Thus
    $\iota^*$ has a right-inverse and hence must be surjective.
\end{proof}

\section{Characteristic Classes of reduced Star Products}
\label{sec:classification}

Having the results of the previous sections at hand, we can proceed to
prove the main theorem of this paper. It relies heavily on
\autoref{corollary:equi_derham_iso}, the classification of
(equivariant) star products on symplectic manifolds and a result from
\cite{bordemann:2005a} which relates the characteristic class of a
star product with the characteristic class of its reduction. Let us
begin by recalling the relevant classification results. On one hand,
the set of equivalence classes of star products on symplectic
manifolds up to equivalences of star products $\Def(M,\omega)$ is
isomorphic to formal power series in the second de Rham cohomology of
the manifold (see \cite{bertelson.cahen.gutt:1997a},
\cite{deligne:1995a}, \cite{gutt.rawnsley:1999a})
\begin{equation*}
    c\colon \Def(M,\omega) \stackrel{\sim}{\longrightarrow}
    \frac{\omega}{\fparam} + \HdR^2(M)\formal
\end{equation*}
while on the other hand, the set of equivalence classes of equivariant
star products deforming a momentum map $J$ up to equivariant
equivalences $\Def(M,\omega,J)$ is isomorphic to power series in the
second equivariant cohomology, see \cite{reichert.waldmann:2016a}
\begin{equation*}
    c\Equi\colon \Def(M,\omega, J) \stackrel{\sim}{\longrightarrow}
    \frac{\omega - J}{\fparam} + \EquiCohom^2(M)\formal.
\end{equation*}
Both $c$ and $c\Equi$ are bijections.  One can even give explicit
expressions of both characteristic classes for the case of
(equivariant) Fedosov star products (see \cite{fedosov:1994a}), which
are essentially all (equivariant) star products (by
\cite{bertelson.cahen.gutt:1997a},
\cite{reichert.waldmann:2016a}). Strictly speaking, the Fedosov
construction maps pairs of a torsion-free, symplectic connection
$\nabla$ and a formal series of closed two-forms $\Omega \in \fparam
\Closed^2(M)\formal$ to star products. We will however fix once and
for all torsion-free, symplectic (and invariant, if applicable)
connections on all manifolds involved and henceforth drop all
references to them. Instead, we will denote Fedosov star products
constructed from $\Omega$ by $F(\Omega)$ for which then the following
equations hold:
\begin{equation*}
    c(F(\Omega)) = \frac{1}{\fparam} [\omega + \Omega]
    \qquad \qquad
    c\Equi(F(\Omega), \J) = \frac{1}{\fparam} [ \omega + \Omega -
    \J]\Equi.
\end{equation*}
Finally, from \cite{bordemann:2005a} we have the lemma
\begin{lemma}
    \label{lemma:bordemann}
    Let $M\Red$ be Marsden-Weinstein reduced from $M$ via $C$ with
    inclusion $\iota\colon C \longrightarrow M$ and principal bundle
    projection $\pi\colon C \longrightarrow M\Red$. Additionally, let
    $(\star, \J)$ be an equivariant star product on $M$ and let
    $\star\Red$ be the corresponding reduced star product on
    $M\Red$. Then we have
    \begin{equation*}
        \iota^* c(\star) = \pi^* c(\star\Red).
    \end{equation*}
\end{lemma}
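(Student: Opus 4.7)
The plan is to reduce to the Fedosov case and then exploit the explicit form of the characteristic class. By the classification of equivariant star products recalled immediately above, any equivariant star product $(\star, \J)$ is equivariantly equivalent to a Fedosov star product $(F(\Omega), \J_F)$ for some $\Omega \in \fparam \Closed^2(M)\formal$. Since equivariantly equivalent star products reduce to equivalent star products on $M\Red$ by the equivalence lemma of \autoref{sec:reduction}, and equivalent star products share the same characteristic class, it suffices to establish the identity for $\star = F(\Omega)$.

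The core step is to describe the reduced star product $\star\Red$ in Fedosov terms. Here I would invoke a compatibility between the Bordemann-Herbig-Waldmann reduction procedure and the Fedosov construction: after fixing torsion-free, $G$-invariant symplectic connections on $M$, $C$ and $M\Red$ in a compatible way, the reduction of $F(\Omega)$ turns out to be equivalent to a Fedosov star product $F(\Omega\Red)$ on $M\Red$ whose defining two-form $\Omega\Red \in \fparam \Closed^2(M\Red)\formal$ satisfies $\iota^*\Omega = \pi^*\Omega\Red$ up to exact contributions. Establishing this compatibility is the technical heart and, I expect, the main obstacle of the argument: one has to trace the Fedosov fixed-point iteration on $M$ through the deformed restriction $I^*$ and the prolongation $\prol$ of \autoref{sec:reduction}, and then recognise the resulting bidifferential operator on $M\Red$ as a Fedosov product built from the reduced connection and the form $\Omega\Red$. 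This is precisely the content of Bordemann's reduction theorem cited in the lemma.

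Granting this identification, the remaining verification is immediate. By the explicit Fedosov formulae one has
\begin{equation*}
    c(\star) = \frac{1}{\fparam}\left[\omega + \Omega\right] \qquad \text{and} \qquad c(\star\Red) = \frac{1}{\fparam}\left[\omega\Red + \Omega\Red\right].
\end{equation*}
Pulling back to $C$ and combining the defining relation $\iota^*\omega = \pi^*\omega\Red$ of the Marsden-Weinstein symplectic form with $\iota^*\Omega = \pi^*\Omega\Red$ on cohomology yields
\begin{equation*}
    \iota^* c(\star) = \frac{1}{\fparam}\left[\iota^*(\omega + \Omega)\right] = \frac{1}{\fparam}\left[\pi^*(\omega\Red + \Omega\Red)\right] = \pi^* c(\star\Red),
\end{equation*}
which is the asserted identity. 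The only nontrivial input is the Fedosov-level compatibility claim; everything else is formal.
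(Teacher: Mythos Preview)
The paper does not prove this lemma at all: it is stated with the preface ``from \cite{bordemann:2005a} we have the lemma'' and is used as a black box in the proof of \autoref{theorem:main}. So there is no ``paper's own proof'' to compare against.

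Your outline is a reasonable reconstruction of how such a result is obtained, and you correctly identify that everything reduces to the Fedosov-level compatibility statement $[\iota^*\Omega] = [\pi^*\Omega\Red]$, with the remaining steps being formal consequences of the explicit characteristic class formula and $\iota^*\omega = \pi^*\omega\Red$. However, note that your argument is not self-contained: you explicitly defer the ``technical heart'' to Bordemann's theorem, which is precisely the lemma you are asked to prove. In other words, you have correctly unpacked the statement into its essential ingredient but have not supplied an independent proof of that ingredient. This matches the paper's own stance (cite and use), so as a proposal it is adequate in context, but you should be aware that no new argument has been given beyond isolating where the work lies.
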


Using all those results, we obtain
\begin{theorem}
\label{theorem:main}
    The characteristic class $c(\star\Red)$ of $\star\Red$ is given by
    \begin{equation*}
        c(\star\Red) = K\left(c\Equi(\star,\J')\right).
    \end{equation*}
\end{theorem}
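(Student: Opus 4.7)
The plan is to reduce to Fedosov representatives, combine \autoref{lemma:bordemann} with the commuting diagram from the introduction to obtain the claimed identity in $\HdR^2(C)$, and then upgrade it to the required identity in $\EquiCohom^2(C)$.

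Both sides of $c(\star\Red) = K(c\Equi(\star,\J))$ depend only on the equivariant equivalence class of $(\star,\J)$ (the left by the preceding lemma, the right by invariance of $c\Equi$), and every equivariant star product is equivariantly equivalent to a Fedosov pair $(F(\Omega),\J_F)$ by \cite{reichert.waldmann:2016a}, so it suffices to treat this case. With the explicit formulas $c\Equi(\star,\J) = \tfrac{1}{\fparam}[\omega + \Omega - \J_F]\Equi$ and $c(\star) = \tfrac{1}{\fparam}[\omega + \Omega]$, the pullback along $\iota^*$ of the commuting diagram that relates $c$ and $c\Equi$ via $(i\circ\mathrm{ev}_0)$, combined with \autoref{lemma:bordemann}, gives
\begin{equation*}
    (i \circ \mathrm{ev}_0)\bigl(\iota^* c\Equi(\star,\J)\bigr) = \iota^* c(\star) = \pi^* c(\star\Red) \quad \text{in } \HdR^2(C).
\end{equation*}
Since $K = (\pi^*)^{-1} \circ \iota^*$ is defined through the equivariant isomorphism $\pi^*\colon \HdR^2(M\Red) \cong \EquiCohom^2(C)$ of \autoref{corollary:equi_derham_iso}, the theorem is equivalent to the sharper statement
\begin{equation*}
    \iota^* c\Equi(\star,\J) = \pi^* c(\star\Red) \quad \text{in } \EquiCohom^2(C).
\end{equation*}

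The main obstacle is precisely this lift from $\HdR^2(C)$ to $\EquiCohom^2(C)$: the de Rham pullback $\pi^*_{\mathrm{dR}}\colon \HdR^2(M\Red) \to \HdR^2(C)$ is in general not injective (witness the Hopf fibration $S^3 \to S^2$), so the $\HdR^2(C)$ identity does not automatically imply the $\EquiCohom^2(C)$ one. To carry out the lift I would look for an invariant one-form $\gamma \in \Formen^1(C)\Inv = \Formen^1\Equi(C)$ whose equivariant differential matches the desired discrepancy at the level of representatives, $\dequi \gamma = \iota^*(\omega + \Omega - \J_F) - \pi^* \tilde c$, for some representative $\tilde c \in \Closed^2(M\Red)$ of $\fparam c(\star\Red)$. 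Decomposed by bidegree, the exterior-degree-two component asks for a Bordemann primitive of $\iota^*(\omega+\Omega) - \pi^*\tilde c$, whose existence is granted by \autoref{lemma:bordemann}, while the symmetric-degree-one, exterior-degree-zero component demands $\ins_{X_\xi} \gamma = -\iota^* \J_F(\xi)$. The compatibility of these two conditions is governed by the equivariant closedness identity $\ins_{X_\xi}(\omega + \Omega) = \drham \J_F(\xi)$ together with the verticality of $X_\xi$ on $C$: a Cartan-formula computation shows that $\ins_{X_\xi}\gamma + \iota^* \J_F(\xi)$ is $\drham$-closed for any Bordemann primitive $\gamma$, hence locally constant on $C$, and the remaining gauge freedom in $\gamma$ (adding closed one-forms built from the principal connection used in \autoref{lemma:cartan_vert_homotopy}) can absorb this constant to complete the construction.
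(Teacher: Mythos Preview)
Your strategy matches the paper's in outline---reduce to Fedosov data, invoke \autoref{lemma:bordemann}, and lift the resulting de~Rham identity on $C$ to the equivariant one $\iota^*c\Equi(\star,\J)=\pi^*c(\star\Red)$ in $\EquiCohom^2(C)$---but the lift in your final paragraph has two genuine gaps. First, the Bordemann primitive $\gamma$ is merely a one-form on $C$, not an invariant one, and for noncompact $G$ you cannot average; without invariance the Cartan computation gives $\drham\bigl(\ins_{X_\xi}\gamma+\iota^*\J_F(\xi)\bigr)=\Lie_{X_\xi}\gamma$, not zero. Second, even granting invariance, the absorption step fails: pairing the locally constant defect $c\in\lie{g}^*$ with a principal connection yields a one-form whose differential is the curvature paired with $c$, which is basic and represents a generally nonzero Chern--Weil class in $\HdR^2(M\Red)$ (think of the Hopf bundle). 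No closed invariant one-form with the prescribed vertical contractions exists unless that class vanishes, so the gauge freedom you invoke is insufficient in general.

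The paper avoids constructing this primitive by hand by running the argument in the opposite direction. It sets $\widetilde{\Omega}=K\bigl(c\Equi(F(\Omega),\J)_+\bigr)$, so that by the very definition of $K$ (via \autoref{corollary:equi_derham_iso}) there already exists an invariant $\theta\in\Formen^1(C)\formal$ satisfying both $\drham\theta=\iota^*\Omega-\pi^*\widetilde{\Omega}$ and $\ins_\bullet\theta=-\iota^*\J_+$ simultaneously. \autoref{lemma:bordemann} then supplies a second primitive $\mu$ with $\drham\mu=\iota^*\Omega-\pi^*\Omega\Red$, and subtracting yields $\pi^*(\widetilde{\Omega}-\Omega\Red)=\drham(\mu-\theta)$, from which the paper reads off $[\widetilde{\Omega}]=[\Omega\Red]$. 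The equivariant primitive thus comes for free from the isomorphism $\pi^*\colon\HdR(M\Red)\cong\EquiCohom(C)$ already established in \autoref{sec:cohomology}, rather than being manufactured from a de~Rham primitive.
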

\begin{proof}
    For the $(-1)$-th order in $\fparam$ this follows directly from
    the Marsden-Weinstein reduction since $\left( \J\at{\fparam = 0}
    \right)\at{C} = 0$ and $\iota^* \omega = \pi^* \omega\Red$. Thus
    $\iota^*(\omega - \J\at{\fparam = 0}) = \iota^* \omega$ is basic
    and $\omega\Red$ is the unique form on $M\Red$ with
    $\pi^*\omega\Red = \iota^*\omega$. For the higher orders, let
    $F(\Omega\Red)$ be a Fedosov star product equivalent to
    $\star\Red$ (for its existence see
    \cite{bertelson.cahen.gutt:1997a}) and let $(F(\Omega), \J)$ be a
    Fedosov star product equivariantly equivalent to $(\star, \J')$
    (which exists due to \cite{reichert.waldmann:2016a}). Now
    observe that
    \begin{equation*}
        \begin{split}
            \iota^*c(F(\Omega),\J)_+ & \coloneqq \iota^* \left(
                c\Equi(F(\Omega),\J) - \left[ \frac{\omega -
                      \J\at{\fparam=0}}{\fparam} \right]\Equi \right)
            = \frac{1}{\fparam}[\iota^*(\Omega - \J_+)]\Equi \\
            \pi^* c(F(\Omega\Red))_+ & \coloneqq \pi^* \left(
                c(F(\Omega\Red)) - \left[ \frac{\omega\Red}{\fparam}
                \right] \right)
            = \frac{1}{\fparam} [\pi^*\Omega\Red]\Equi \\
        \end{split}
    \end{equation*}
    where we denoted by $\J_+$ the terms of order strictly greater $0$
    in $\fparam$. Using $\widetilde{\Omega} = K\left(
        c\Equi(F(\Omega),\J)_+ \right)$ as a shorthand notation, we
    know from the definition of $K$ (see
    \autoref{corollary:kirwan_map}) that
    $\iota^*c\Equi(F(\Omega),\J)_+ =
    [\pi^*\widetilde{\Omega}]\Equi$. Hence we have
    \begin{equation*}
        [\iota^*(\Omega - \J)]\Equi = [\pi^* \widetilde{\Omega}]\Equi
    \end{equation*}
    which is equivalent (by using the definition of $\dequi$) to
    \begin{equation*}
        \iota^* \Omega - \pi^* \widetilde{\Omega} = \drham \theta
        \qquad \text{and} \qquad \ins_\bullet \theta = -\J_+
    \end{equation*}
    for some $\theta \in \Formen^1(C)\formal$. Additionally, we know
    from \autoref{lemma:bordemann}, \cite{bordemann:2005a} that
    $\iota^*\Omega - \pi^*\Omega\Red = \drham \mu$ for some $\mu \in
    \Formen^1(C)\formal$. Combining those two yields
    \begin{equation*}
        \pi^*\widetilde{\Omega}
        - \pi^*\Omega\Red = \drham(\mu - \theta).
    \end{equation*}
    Here the left hand side tells us that the form is basic, while
    from the right hand side, we see that it is exact. Hence we can
    infer the existence of $\chi \in \Formen^1(M\Red)\formal$ such
    that $\pi^*(\widetilde{\Omega} - \Omega\Red) = \drham
    \pi^*\chi$. But this immediately shows that
    $[\pi^*\widetilde{\Omega}]\Equi = [\pi^*\Omega\Red]\Equi$ and in
    turn
    \begin{equation*}
        \iota^*c\Equi(F(\Omega), \J)_+
        = \frac{1}{\fparam} [\iota^*(\Omega - \J)]\Equi
        = \frac{1}{\fparam} [\pi^*\widetilde{\Omega}]\Equi
        = \frac{1}{\fparam} [\pi^*\Omega\Red]\Equi
        = \pi^*c(F(\Omega\Red))_+
    \end{equation*}
    Finally, remember that $K$ is precisely $(\pi^*)^{-1} \circ
    \iota^*$ to conclude the proof.
\end{proof}

With \autoref{corollary:kirwan_map} and \autoref{theorem:main} we can
deduce the following corollaries:

\begin{corollary}
    Let $M\Red$ be Marsden-Weinstein reduced from $M$ by $G$. Then for
    any star product $\star$ on $M\Red$, there exists a
    $G$-equivariant star product $(\widetilde{\star}, \J)$ on $M$ such
    that $\star$ and $(\widetilde{\star},\J)\Red$ are equivalent.
\end{corollary}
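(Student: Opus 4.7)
The plan is to combine three ingredients already established in the paper: the classification of equivariant star products on $M$ via $c\Equi$, the classification of star products on $M\Red$ via $c$, and the identity $K(c\Equi(\star,\J)) = c(\star\Red)$ from \autoref{theorem:main}, together with the surjectivity of $K$.

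First, given an arbitrary star product $\star$ on $M\Red$, consider its characteristic class $c(\star) \in \frac{\omega\Red}{\fparam} + \HdR^2(M\Red)\formal$ and write it as $c(\star) = \frac{\omega\Red}{\fparam} + [\beta]$ with $[\beta] \in \HdR^2(M\Red)\formal$. By the surjectivity of $K\colon \EquiCohom^2(M) \longrightarrow \HdR^2(M\Red)$ (the corollary preceding \autoref{corollary:kirwan_map}, extended $\fparam$-linearly to formal series), pick a class $[\alpha]\Equi \in \EquiCohom^2(M)\formal$ with $K([\alpha]\Equi) = [\beta]$.

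Next, by the classification of equivariant star products established in \cite{reichert.waldmann:2016a}, the assignment $c\Equi$ is a bijection onto $\frac{\omega - J}{\fparam} + \EquiCohom^2(M)\formal$. Hence there exists an equivariant star product $(\widetilde{\star}, \J)$ on $M$ with
\begin{equation*}
    c\Equi(\widetilde{\star}, \J) = \frac{\omega - J}{\fparam} + [\alpha]\Equi.
\end{equation*}
Applying $K$ and using that $K\bigl(\frac{\omega - J}{\fparam}\bigr) = \frac{\omega\Red}{\fparam}$ (this is exactly the $\fparam^{-1}$-order computation carried out at the start of the proof of \autoref{theorem:main}, which uses $\iota^*\omega = \pi^*\omega\Red$ and $\J\at{\fparam=0}\at{C}=0$), we obtain
\begin{equation*}
    K\bigl(c\Equi(\widetilde{\star}, \J)\bigr) = \frac{\omega\Red}{\fparam} + [\beta] = c(\star).
\end{equation*}

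Finally, \autoref{theorem:main} yields $c\bigl((\widetilde{\star},\J)\Red\bigr) = K\bigl(c\Equi(\widetilde{\star},\J)\bigr) = c(\star)$. Since $c$ is a bijection on equivalence classes of star products on $(M\Red, \omega\Red)$, the two star products $(\widetilde{\star},\J)\Red$ and $\star$ must be equivalent, proving the claim. No step is really the main obstacle here: all the work has already been done in \autoref{sec:cohomology} (surjectivity of $K$) and \autoref{theorem:main}; the only thing to check is that the affine shifts match, and this is precisely the low-order part of the main theorem.
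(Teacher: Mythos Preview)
Your proof is correct and follows essentially the same approach as the paper: the paper's own proof is the one-liner ``$K$ is surjective and two star products on $M\Red$ are equivalent if and only if their characteristic classes coincide,'' which is exactly the argument you have spelled out in full detail (including the careful bookkeeping of the affine shift by $\frac{\omega\Red}{\fparam}$). One minor slip: the surjectivity corollary comes \emph{after} \autoref{corollary:kirwan_map} in the paper, not before it.
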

\begin{proof}
    $K$ is surjective and two star products on $M\Red$ are equivalent
    if and only if their characteristic classes coincide
    \cite{bertelson.cahen.gutt:1997a}.
\end{proof}

\begin{corollary}
\label{corollary:universal_reduction}
If $M\Red$ can be Marsden-Weinstein reduced from $M$ by $G$ and the
second invariant de Rham cohomology $\HdR\InvS{2}(M)$ vanishes, then
for any star product $\star$ on $M\Red$ there exists a quantum
momentum map $\J$ of $F(0)$ such that $(F(0), \J)\Red$ is equivalent
to $\star$.
\end{corollary}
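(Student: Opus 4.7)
The plan is to refine the preceding corollary by showing that the equivariant lift can be realized specifically as the Fedosov pair $(F(0),\J)$. Applying the preceding corollary to the given star product $\star$ on $M\Red$ yields an equivariant star product $(\widetilde{\star},\J')$ on $M$ whose reduction $(\widetilde{\star},\J')\Red$ is equivalent to $\star$. By the equivariant Fedosov classification of \cite{reichert.waldmann:2016a}, this $(\widetilde{\star},\J')$ is equivariantly equivalent to a Fedosov pair $(F(\Omega),\widetilde{\J})$ for some invariant formal series $\Omega \in \fparam\Closed^2(M)\Inv\formal$ and some quantum momentum map $\widetilde{\J}$ of $F(\Omega)$.

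To eliminate $\Omega$, the hypothesis $\HdR\InvS{2}(M)=0$ allows one to pick, order-by-order in $\fparam$, an invariant primitive $\eta \in \fparam\Formen^1(M)\Inv\formal$ with $\drham\eta = \Omega$. Standard invariant Fedosov theory then produces a $G$-equivariant equivalence $T\colon F(\Omega) \longrightarrow F(0)$ parametrized by $\eta$, meaning an algebra morphism intertwining the pullback $G$-actions. Setting $\J \coloneqq T\widetilde{\J}$, the $G$-equivariance of $T$ combined with its being a star-algebra morphism yields
\begin{equation*}
    \Lie_\xi = T\,\Lie_\xi\, T^{-1} = -T\,\qad_{F(\Omega)}(\widetilde{\J}(\xi))\,T^{-1} = -\qad_{F(0)}(\J(\xi)),
\end{equation*}
and an analogous transport of the bracket relation $[\widetilde{\J}(\xi),\widetilde{\J}(\eta)]_{F(\Omega)} = \fparam\widetilde{\J}([\xi,\eta])$. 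Hence $\J$ is a quantum momentum map of $F(0)$, and $T$ upgrades tautologically to an equivariant equivalence $(F(\Omega),\widetilde{\J}) \longrightarrow (F(0),\J)$.

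By the lemma at the end of \autoref{sec:reduction}, any equivariant equivalence descends to an equivalence between the corresponding reduced star products, so
\begin{equation*}
    (F(0),\J)\Red \sim (F(\Omega),\widetilde{\J})\Red \sim (\widetilde{\star},\J')\Red \sim \star,
\end{equation*}
which is the desired statement. The main technical obstacle is producing the $G$-equivariant equivalence $T$; this requires an invariant version of the standard Fedosov equivalence theorem, asserting that two invariant Fedosov star products whose invariant characteristic classes agree in $\HdR\InvS{2}(M)\formal$ are $G$-equivariantly equivalent. I expect this to follow from the usual Fedosov interpolation argument built from the invariant primitive $\eta$, the crucial point being that all intermediate objects (deformed Fedosov derivations, fibrewise automorphisms) can be kept inside their invariant subspaces throughout the construction.
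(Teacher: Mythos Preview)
Your proposal is correct and follows essentially the same route as the paper: use the vanishing of $\HdR\InvS{2}(M)$ to produce an invariant (hence equivariant) equivalence to $F(0)$, transport the quantum momentum map along it, and invoke the lemma that equivariant equivalences descend to the reduced side. The paper's argument is only marginally more direct in that it skips your intermediate Fedosov representative $(F(\Omega),\widetilde{\J})$ and applies the invariant classification straight to the underlying invariant star product $\widetilde{\star}$; the ``main technical obstacle'' you single out (the invariant Fedosov equivalence theorem) is exactly the ingredient the paper invokes when it asserts that any two invariant star products are invariantly equivalent once $\HdR\InvS{2}(M)=0$.
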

\begin{proof}
    First, $F(0)$ is invariant, see
    \cite{mueller-bahns.neumaier:2004a}.  Since $\HdR\InvS{2}(M) = 0$
    any two invariant star products are invariantly equivalent and
    hence every invariant star product is invariantly equivalent to
    $F(0)$. But then every equivariant star product $(\star, \J')$ is
    equivariantly equivalent to $(\star_M, T\J')$ whenever $T$ is an
    invariant equivalence between $\star$ and $\star_M$.
\end{proof}

Especially the second corollary should be reminiscent of
\cite{gotay.tuynman:1989a} wherein it is shown that $\mathbb{R}^n$ is
(up to a cohomological condition) universal with respect to reduction,
that is almost every symplectic manifold $M\Red$ can be obtained as a
reduction of $\mathbb{R}^n$. Here, every star product on a symplectic
manifold $M\Red$ that has been Marsden-Weinstein reduced from $M$,
can be obtained as a reduction of $F(0)$ as long as $\HdR\InvS{2}(M)$
vanishes . The main difference (and drawback) with
\autoref{corollary:universal_reduction} is of course that it is
restricted to Marsden-Weinstein reduction only whereas in\cite{gotay.tuynman:1989a} reduction with respect to coisotropic
submanifolds is used and it is, to the authors knowledge, not clear
which symplectic manifolds arise as Marsden-Weinstein reductions from
$\mathbb{R}^n$. On the other hand, reduction of star products by
coisotropic manifolds seems to be difficult and only partial results
are known (compare \cite{bordemann:2005a, cattaneo.felder:2004a,
  cattaneo.felder:2007a, gloessner:1998a:pre}).  Also, one can easily
see that the condition $\HdR\InvS{2}(M)$ in
\autoref{corollary:universal_reduction} poses a real obstruction as
seen in the example of $\mathbb{R}^n$ acting on $\mathbb{R}^n$ by
translations.

%
%

{
 \footnotesize
 \renewcommand{\arraystretch}{0.5}

}

%
%

\ifdraft{\clearpage}
\ifdraft{\phantomsection}
\ifdraft{\addcontentsline{toc}{section}{List of Corrections}}
\ifdraft{\listoffixmes}

%
%

\end{document}
